\patchcmd\Gread@eps{\@inputcheck#1 }{\@inputcheck"#1"\relax}{}{}
\newtheorem{theorem}{Theorem}[section]
\newtheorem{proposition}[theorem]{Proposition}
\newtheorem{conjecture}[theorem]{Conjecture}
\newtheorem{corollary}[theorem]{Corollary}
\newtheorem{lemma}[theorem]{Lemma}
\newtheorem{remark}[theorem]{Remark}
\newcommand{\qed}{\hfill $\square$\medskip}
\begin{document}

\title{On the Sombor characteristic polynomial and Sombor energy of a graph}

\author{
Nima Ghanbari 
}

\date{\today}

\maketitle

\begin{center}
	Department of Informatics, University of Bergen, P.O. Box 7803, 5020 Bergen, Norway\\
{\tt  Nima.ghanbari@uib.no }
\end{center}


\begin{abstract}
Let $G$ be a simple graph with vertex set $V(G) = \{v_1, v_2,\ldots, v_n\}$. The Sombor matrix of $G$, denoted by $A_{SO}(G)$, is defined as the $n\times n$ matrix whose $(i,j)$-entry is $\sqrt{d_i^2+d_j^2}$ if $v_i$ and
$v_j$ are adjacent and $0$ for another cases.
Let the eigenvalues of the Sombor matrix $A_{SO}(G)$ be  $\rho_1\geq \rho_2\geq \ldots\geq \rho_n$ which are the roots of the Sombor characteristic polynomial $\prod_{i=1}^n (\rho-\rho_i)$. The Sombor energy $En_{SO}$ of $G$ is the sum of
absolute values of the eigenvalues of $A_{SO}(G)$. In this paper we compute the Sombor characteristic polynomial and the Sombor energy for  some  graph classes, define Sombor energy unique and propose a conjecture on Sombor Energy.
\end{abstract}

\noindent{\bf Keywords:} Sombor Matrix, Sombor Energy, Sombor Characteristic Polynomial,  Regular Graphs, Eigenvalues.

\medskip
\noindent{\bf AMS Subj.\ Class.:} 05C12, 05C50

\section{Introduction}

In this paper we are concerned with simple finite graphs, without directed, multiple, or weighted edges, and without self-loops. Let $G=(V,E)$ be such a graph, with vertex set $V(G) = \{v_1, v_2,\ldots, v_n\}$. If two vertices $v_i$ and $v_j$ of $G$ are adjacent, then we use the notation $v_i \sim v_j $. For $v_i \in V(G)$, the degree of the vertex $v_i$, denoted by $d_i$, is the number of the vertices adjacent to $v_i$.

Let $A(G)$ be adjacency matrix of $G$ and $\lambda_1,\lambda_2,\ldots,\lambda_n$ its eigenvalues. These are said to be the eigenvalues of the graph $G$ and to form its spectrum \cite{Cve}. The energy $E(G)$ of the graph $G$ is defined as the sum of the absolute values of its eigenvalues
$$E(G)=\sum_{i=1}^n\vert\lambda_i\vert.$$
Details and more information on graph energy can be found in \cite{Gut,Gut1,Gut2,Maj}. There are many kinds of graph energies, such as Randi\'{c} energy \cite{Alikhani,Boz,Boz1,Das,GutB}, distance energy \cite{Ste}, incidence energy \cite{Boz2},  matching energy \cite{chen,Jis} and Laplacian energy \cite{Das0}. 

Sombor index is defined as  
$SO(G) =\sum_{uv\in E(G)}\sqrt{d_u^2+d_v^2}$ (see \cite{Gutman2}). More details on Sombor index can be found in \cite{Alikhani1,AMC,Symmetry,Deng,Sombor,Red,Wang}. Recently, in \cite{Gutman3}, Gutman introduced sombor matrix of a graph $G$ as $A_{SO}(G)=(r_{ij})_{n\times n}$, and

\begin{displaymath}
 r_{ij}= \left\{ \begin{array}{ll}
\sqrt{d_i^2+d_j^2} & \textrm{if $v_i \sim v_j$}\\
0 & \textrm{otherwise.}
\end{array} \right.
\end{displaymath}

The eigenvalues of $A_{SO}(G)$ are denoted by $\rho_1\geq \rho_2\geq \ldots\geq \rho_n$, and are said to form the Sombor spectrum of the graph $G$. In \cite{Gutman3}, Gutman introduced sombor characteristic polynomial $\phi _{SO}(G,\lambda)$ as

$$\phi _{SO}(G,\lambda)=det(\lambda I -A_{SO}(G))=\prod_{i=1}^n (\lambda-\rho_i),$$

and Sombor energy $En_{SO}(G)$ as

$$En_{SO}(G)=\sum_{i=1}^{n}|\rho_i|.$$

Two graphs $G$ and $H$ are said to be {\it Sombor energy equivalent},
or simply ${\cal EN_{SO}}$-equivalent, written $G\sim H$, if
$En_{SO}(G)=En_{SO}(H)$. It is evident that the relation $\sim$ of being
${\cal EN_{SO}}$-equivalence
 is an equivalence relation on the family ${\cal G}$ of graphs, and thus ${\cal G}$ is partitioned into equivalence classes,
called the {\it ${\cal EN_{SO}}$-equivalent}. Given $G\in {\cal G}$, let
\[
[G]=\{H\in {\cal G}:H\sim G\}.
\]
We call $[G]$ the equivalence class determined by $G$.
A graph $G$ is said to be {\it Sombor energy unique}, or simply {\it ${\cal EN_{SO}}$-unique}, if $[G]=\{G\}$.

 A graph $G$ is called {\it $k$-regular} if all
vertices  have the same degree $k$.  One of the famous graphs is the Petersen
graph which is a symmetric non-planar 3-regular graph.  In the study of Sombor energy, it is interesting to investigate
 the Sombor  characteristic polynomial and Sombor energy of this graph. We denote the Petersen graph by $P$.

In this paper, we consider the Sombor characteristic polynomial and Sombor energy of graphs. In Section 2, we bring some known results about Sombor characteristic polynomial and Sombor energy. Also the Sombor characteristic polynomial and Sombor energy of some special kind of graphs are computed. In Section 3, We Consider to regular graphs, especially cubic graphs of order 10 and state a conjecture. In Section 4, we state some open problems for future direction of this research .

\section{Sombor energy of specific graphs}\label{sec1}
In this section we study the Sombor characteristic polynomial and the Sombor energy for certain graphs. The following result gives McClelland-type bound for the Sombor energy.

\begin{theorem}\cite{Gutman3}
If $G$ is a graph on $n$ vertices, and $F(G)$ is its forgotten topological index, then
$$En_{SO}(G)\leq \sqrt{2nF(G)}.$$
\end{theorem}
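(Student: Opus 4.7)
The plan is to mimic the classical McClelland bound for ordinary graph energy. The key identity to extract is a formula for $\sum_{i=1}^n \rho_i^2$ in terms of $F(G)$.

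First I would compute the trace of $A_{SO}(G)^2$ in two different ways. On the one hand, since $A_{SO}(G)$ is real symmetric with eigenvalues $\rho_1,\ldots,\rho_n$, we have $\mathrm{tr}(A_{SO}(G)^2)=\sum_{i=1}^n \rho_i^2$. On the other hand, $\mathrm{tr}(A_{SO}(G)^2)$ equals the sum of squares of all entries, namely
\[
\sum_{i,j}\bigl(A_{SO}(G)\bigr)_{ij}^2 \;=\; \sum_{v_i\sim v_j}(d_i^2+d_j^2) \;=\; 2\sum_{uv\in E(G)}(d_u^2+d_v^2) \;=\; 2F(G),
\]
using that each edge $\{u,v\}$ is counted once as $(i,j)$ and once as $(j,i)$, and the standard identity $F(G)=\sum_{uv\in E(G)}(d_u^2+d_v^2)=\sum_{v\in V(G)}d_v^3$. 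Thus $\sum_{i=1}^n \rho_i^2=2F(G)$.

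Next I would apply the Cauchy--Schwarz inequality to the vectors $(|\rho_1|,\ldots,|\rho_n|)$ and $(1,1,\ldots,1)$:
\[
En_{SO}(G)=\sum_{i=1}^n |\rho_i|\cdot 1 \;\le\; \sqrt{\,n\sum_{i=1}^n \rho_i^2\,}\;=\;\sqrt{2nF(G)},
\]
which is exactly the claimed bound.

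There is really no serious obstacle: the only ingredients are the trace computation (straightforward from the definition of $A_{SO}(G)$) and a single application of Cauchy--Schwarz, the same scheme McClelland used for the ordinary adjacency spectrum. The mild point to double-check is the bookkeeping of the factor $2$ in passing from the entry-sum over all $(i,j)$ to the edge-sum, and the equivalence of $\sum_{uv\in E(G)}(d_u^2+d_v^2)$ with the forgotten index $F(G)=\sum_v d_v^3$.
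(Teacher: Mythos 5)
Your proof is correct and is exactly the standard McClelland-type argument used in the cited source \cite{Gutman3}: the paper itself states this theorem without proof (it is imported by citation), and the two ingredients you identify --- $\mathrm{tr}(A_{SO}(G)^2)=\sum_i\rho_i^2=2F(G)$ via the entry-sum, followed by Cauchy--Schwarz against the all-ones vector --- are precisely what is needed. The bookkeeping of the factor $2$ and the identity $F(G)=\sum_v d_v^3=\sum_{uv\in E(G)}(d_u^2+d_v^2)$ are both handled correctly.
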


The following result gives Koolen-Moulton-type bound for the Sombor energy.

\begin{theorem}\cite{Gutman3}
Let $G$ be a graph on $n$ vertices, with Sombor and forgotten topological indices $SO(G)$ and  $F(G)$, respectively. Then

$$En_{SO}(G)\leq \frac{2SO(G)}{n}+\sqrt{(n-1)\left(2F(G)-\left(\frac{2SO(G)}{n}\right)^2\right)}.$$
\end{theorem}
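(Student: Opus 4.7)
The plan is to mimic the classical Koolen--Moulton proof for ordinary graph energy, whose two key ingredients are the identities $\sum_i \lambda_i = 0$ and $\sum_i \lambda_i^{2} = 2m$. For the Sombor matrix $A_{SO}(G)$ the diagonal is zero, so $\sum_{i=1}^{n} \rho_i = 0$, and the analogue of $2m$ is
$$\sum_{i=1}^{n} \rho_i^{2} \;=\; \text{tr}\!\left(A_{SO}(G)^{2}\right) \;=\; \sum_{i,j} r_{ij}^{2} \;=\; 2\sum_{uv\in E(G)}\bigl(d_u^{2}+d_v^{2}\bigr) \;=\; 2F(G),$$
which produces exactly the quantity $2F(G)$ that appears in the stated bound.

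First I would isolate the eigenvalue of largest modulus. Since $A_{SO}(G)$ is a symmetric nonnegative matrix, the Perron--Frobenius theorem gives $\rho_1 \geq |\rho_i|$ for every $i$, so in particular $|\rho_1|=\rho_1$. Applying the Cauchy--Schwarz inequality to the vectors $(|\rho_2|,\dots,|\rho_n|)$ and $(1,\dots,1)\in\mathbb{R}^{n-1}$ then yields
$$\left(\sum_{i=2}^{n} |\rho_i|\right)^{2} \;\leq\; (n-1)\sum_{i=2}^{n} \rho_i^{2} \;=\; (n-1)\bigl(2F(G)-\rho_1^{2}\bigr),$$
and hence
$$En_{SO}(G) \;\leq\; \rho_1 + \sqrt{(n-1)\bigl(2F(G)-\rho_1^{2}\bigr)}.$$

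Next I would bound $\rho_1$ from below via the Rayleigh quotient with the all-ones vector $\mathbf{1}\in\mathbb{R}^{n}$: a direct calculation gives $\mathbf{1}^{T}A_{SO}(G)\mathbf{1}=2SO(G)$ (each edge is counted twice), so $\rho_1 \geq 2SO(G)/n$. To convert this into the stated inequality I consider the single-variable function $f(x) = x + \sqrt{(n-1)(2F(G)-x^{2})}$; a short derivative computation shows that $f$ attains its maximum at $x=\sqrt{2F(G)/n}$ and is strictly decreasing for $x \geq \sqrt{2F(G)/n}$. Provided $2SO(G)/n$ lies in this decreasing range, the chain $\rho_1 \geq 2SO(G)/n \geq \sqrt{2F(G)/n}$ together with the monotonicity of $f$ gives $f(\rho_1) \leq f(2SO(G)/n)$, which is exactly the claimed inequality.

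The main obstacle is precisely the last point: verifying that $2SO(G)/n \geq \sqrt{2F(G)/n}$, i.e.\ that $\bigl(2SO(G)\bigr)^{2} \geq 2nF(G)$, so that both $\rho_1$ and $2SO(G)/n$ sit in the decreasing regime of $f$. This is the Sombor analogue of the standard side condition $2m \geq n$ in the classical Koolen--Moulton bound and has to be justified under suitable hypotheses on $G$ (typically the absence of isolated vertices, together with the edge-by-edge inequality $\sqrt{d_u^{2}+d_v^{2}} \geq \max\{d_u,d_v\} \geq 1$). Once this monotonicity check is in place, every remaining step reduces to standard linear algebra applied to the two identities $\sum_{i}\rho_i = 0$ and $\sum_{i}\rho_i^{2} = 2F(G)$.
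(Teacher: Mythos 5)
The paper does not actually prove this statement---it is quoted from \cite{Gutman3} without proof---so there is no in-paper argument to compare against; your proposal follows the same Koolen--Moulton template that the cited source uses. The core of your argument is sound: the diagonal of $A_{SO}(G)$ is zero, $\mathrm{tr}(A_{SO}(G)^2)=2\sum_{uv\in E(G)}(d_u^2+d_v^2)=2F(G)$, the Rayleigh quotient with the all-ones vector gives $\rho_1\geq 2SO(G)/n$, and the Cauchy--Schwarz step yields $En_{SO}(G)\leq \rho_1+\sqrt{(n-1)(2F(G)-\rho_1^2)}$. All of that is correct and is exactly the intended adaptation.

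The genuine gap is the side condition you yourself flag, and it is worse than you suggest. You need $2SO(G)/n\geq\sqrt{2F(G)/n}$, i.e.\ $2SO(G)^2\geq nF(G)$, to place $2SO(G)/n$ in the decreasing range of $f$, and you propose to justify this from the absence of isolated vertices together with $\sqrt{d_u^2+d_v^2}\geq\max\{d_u,d_v\}\geq 1$. That route cannot work, because the condition can fail even for graphs with no isolated vertices: for $G=K_{1,3}\cup 5K_2$ one has $n=14$, $F(G)=40$, $SO(G)=3\sqrt{10}+5\sqrt{2}\approx 16.56$, so $2SO(G)^2\approx 548.3<560=nF(G)$. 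In that regime $f(2SO(G)/n)$ lies strictly below the maximum value $f(\sqrt{2F(G)/n})=\sqrt{2nF(G)}$, while the chain of inequalities only gives $En_{SO}(G)\leq f(\rho_1)\leq\sqrt{2nF(G)}$, so the monotonicity argument does not deliver the claimed bound there. To close the proof you must either add an explicit hypothesis of the form $2SO(G)^2\geq nF(G)$ (the exact analogue of the hypothesis $2m\geq n$ in the classical Koolen--Moulton theorem, which is likewise implicit in the statement as reproduced here), or supply a separate argument for the complementary regime; as written, the proposal does neither.
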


Here we shall compute the Sombor characteristic polynomial of paths and cycles.

\begin{theorem}\label{thm-path scp}
For every $n\geq 5$, the Sombor characteristic polynomial of the path graph $P_n$ satisfy:
$$\phi _{SO}(P_n,\lambda)=  \lambda ^2 \Lambda _{n-2}-10\lambda \Lambda _{n-3}+25\Lambda _{n-4},$$
where for every $k\geq 3$, $\Lambda _k = \lambda \Lambda _{k-1}-8\Lambda _{k-2}$ with $\Lambda _1=\lambda $ and $\Lambda _2= \lambda ^2 -8$. Also the characteristic polynomial of $P_2,P_3$ and $P_4$ are $\lambda ^2 -2,\lambda ^3 -10 \lambda$ and $\lambda ^4 -18 \lambda ^2 +25$ respectively.
\end{theorem}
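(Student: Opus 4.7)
The Sombor matrix $A_{SO}(P_n)$ is tridiagonal, and the degree sequence $1,2,2,\dots,2,1$ of $P_n$ shows that its only nonzero off-diagonal entries are $\sqrt{5}=\sqrt{1+4}$ on the two boundary edges and $\sqrt{8}=\sqrt{4+4}$ on the $n-3$ interior edges. The low cases $P_2,P_3,P_4$ are to be dispatched by an explicit $2\times2$, $3\times3$, or $4\times 4$ determinant evaluation. The recurrence $\Lambda_k=\lambda\Lambda_{k-1}-8\Lambda_{k-2}$ with $\Lambda_1=\lambda$ and $\Lambda_2=\lambda^2-8$ is nothing but the standard three-term recurrence, obtained by expansion along the last row, for the determinant of the $k\times k$ tridiagonal matrix with $\lambda$ on the diagonal and $-\sqrt{8}$ on the super- and subdiagonals.

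For $n\geq 5$ my plan is a two-stage cofactor expansion. First I expand $\det(\lambda I-A_{SO}(P_n))$ along its first row, whose only nonzero entries are $\lambda$ at position $(1,1)$ and $-\sqrt{5}$ at position $(1,2)$. The $(1,1)$-minor is an $(n-1)\times(n-1)$ tridiagonal matrix still carrying a lone $-\sqrt{5}$ at its far boundary; I denote its determinant $F_{n-1}(\lambda)$. The $(1,2)$-minor has its first column supported only by a single $-\sqrt{5}$ in the top entry, so a further expansion along that column collapses it to $-\sqrt{5}\,F_{n-2}(\lambda)$, where I invoke the reversal symmetry of tridiagonal determinants (conjugation by the index-reversing permutation) to reidentify the remaining block with a single-boundary $F$-matrix. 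This produces
\[
\phi_{SO}(P_n,\lambda)=\lambda F_{n-1}(\lambda)-5F_{n-2}(\lambda).
\]

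Next I perform the analogous move on $F_k(\lambda)$: its unique $-\sqrt{5}$ sits in the first row, while the rest of the matrix is the pure-$\sqrt{8}$ tridiagonal pattern of $\Lambda$-type. A single cofactor expansion along that row yields $F_k(\lambda)=\lambda\Lambda_{k-1}-5\Lambda_{k-2}$. Substituting this into the previous identity gives
\[
\phi_{SO}(P_n,\lambda)=\lambda(\lambda\Lambda_{n-2}-5\Lambda_{n-3})-5(\lambda\Lambda_{n-3}-5\Lambda_{n-4})=\lambda^2\Lambda_{n-2}-10\lambda\Lambda_{n-3}+25\Lambda_{n-4},
\]
as claimed. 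The only real obstacle is clerical: tracking cofactor signs across two nested expansions and checking that every shrunken block falls into one of the three closed types ($A_{SO}(P_k)$-type, $F_k$-type, or $\Lambda_k$-type), so the recursion really does close. Invoking the reversal symmetry of tridiagonal matrices at each step is what prevents "left-boundary" and "right-boundary" variants of $F_k$ from proliferating into separate auxiliary families.
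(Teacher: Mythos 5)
Your proposal is correct and follows essentially the same route as the paper: a cofactor expansion along the first row of $\lambda I-A_{SO}(P_n)$, followed by further expansions that reduce everything to the pure-$\sqrt{8}$ tridiagonal determinants $\Lambda_k$; your auxiliary quantity $F_k=\lambda\Lambda_{k-1}-5\Lambda_{k-2}$ is exactly the intermediate determinant the paper computes without naming. (A minor simplification: the appeal to reversal symmetry is not actually needed, since every residual block arising in the expansion already carries its lone $-\sqrt{5}$ at the bottom-right corner and is thus of $F$-type as it stands.)
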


\begin{proof}
	It is easy to see that the characteristic polynomial of $P_2$ is $\lambda ^2 -2$, Also for  $P_3$ is $\lambda ^3 -10 \lambda$ and for $P_4$ is $\lambda ^4 -18 \lambda ^2 +25$. Now for every $k\geq 3$ consider

	\[
    	M_k:=\left(
         	\begin{array}{cccccc}
         	\lambda & -\sqrt{8}  & 0  & \ldots  & 0 &   0         \\
			-\sqrt{8} & \lambda & -\sqrt{8}  & \ldots  &  0&   0          \\
         	0& -\sqrt{8} & \lambda  & \ldots  & 0 &   0          \\
         \vdots	& \vdots & \vdots  & \ddots  & \vdots &  \vdots           \\					
			0&  0& 0  & \ldots     & \lambda &   -\sqrt{8}          \\
			0& 0 & 0&  \ldots    & -\sqrt{8} & \lambda            
        	\end{array}
    	\right)_{k\times k},
	\]
	
	and let $\Lambda _k= det(M_k)$. One can easily check that $\Lambda _k = \lambda \Lambda _{k-1}-8\Lambda _{k-2}$ . Now consider the path graph $P_n$. Suppose that $\phi _{SO}(P_n,\lambda)= det (\lambda I - A_{SO}(P_n))$. We have 
	
	\[
    	\phi _{SO}(P_n,\lambda)= det\left(
         	\begin{array}{c|cccccc|c}
         	\lambda & -\sqrt{5}  & 0 & 0 & \ldots & 0 & 0 &   0         \\
         	\hline
			-\sqrt{5} & & & & & & &   0          \\
         	0&  & & & & &  &  0           \\
			0&   & &  & & &  &  0        \\
         \vdots	&  & &  & M_{n-2} & & &  \vdots           \\
			0&   & & & & &   &    0        \\						
			0&  & & & & &  &   -\sqrt{5}          \\
			\hline
			0& 0 & 0& 0 & \ldots   & 0 & -\sqrt{5} & \lambda           
        	\end{array}
    	\right)_{n\times n}.
	\]
	
	So,

\begin{align*}
    	\phi _{SO}(P_n,\lambda)&= 	
    	 \lambda det\left(
         	\begin{array}{cccc|c}
			   & & &  &  0        \\
            &  & M_{n-2} & &  \vdots           \\
			    & & &   &    0        \\						
			 & & &  &   -\sqrt{5}          \\
			\hline
			 0 & \ldots   & 0 & -\sqrt{5} & \lambda           
        	\end{array}
    	\right)\\
    	& \quad+
    	\sqrt{5} det\left(
         	\begin{array}{c|ccc|c}
			 - \sqrt{5} &  - \sqrt{8}  & \ldots  & 0 &  0        \\
			 \hline
          0  &  &  & &  0           \\
			  \vdots  & &M_{n-3} &   &    \vdots       \\						
			0 & & &  &   -\sqrt{5}          \\
			\hline
			 0 & 0  & \ldots  & -\sqrt{5} & \lambda           
        	\end{array}
        	\right).        	
\end{align*}

And so,

\begin{align*}
    	\phi_{SO}(P_n,\lambda)&= 	
    	 \lambda\left(\lambda \Lambda _{n-2} + \sqrt{5} det\left(
         	\begin{array}{cccc|c}
			   & & &  &  0        \\
            &  & M_{n-3} & &  \vdots           \\
			    & & &   &    0        \\						
			 & & &  &   0         \\
			\hline
			 0 & \ldots   & 0 & -\sqrt{8} & -\sqrt{5}           
        	\end{array}
    	\right)\right)\\
    	& \quad -5
    	det\left(
         	\begin{array}{cccc|c}
			   & & &  &  0        \\
            &  & M_{n-3} & &  \vdots           \\
			    & & &   &    0        \\						
			 & & &  &   -\sqrt{5}          \\
			\hline
			 0 & \ldots   & 0 & -\sqrt{5} & \lambda           
        	\end{array}
    	\right)     	
\end{align*}

Hence,

\begin{align*}
    	\phi_{SO}(P_n,\lambda)&= 	
    	 \lambda\left(\lambda \Lambda _{n-2} -5 \Lambda _{n-3}\right)\\
    	& \quad -5 \left(
    	\lambda \Lambda _{n-3} +\sqrt{5} det\left(
         	\begin{array}{cccc|c}
			   & & &  &  0        \\
            &  & M_{n-4} & &  \vdots           \\
			    & & &   &    0        \\						
			 & & &  &   0         \\
			\hline
			 0 & \ldots   & 0 & -\sqrt{8} & -\sqrt{5}           
        	\end{array}
    	\right)\right) \\
    	&= 	
    	 \lambda\left(\lambda \Lambda _{n-2} -5 \Lambda _{n-3}\right)      -5 \left(
    	\lambda \Lambda _{n-3} -5\Lambda _{n-4} \right),	   	
\end{align*}

and therefore we have the result.\qed
\end{proof}

\begin{theorem}\label{thm-cycle}
For every $n\geq 3$, the Sombor characteristic polynomial of the cycle graph $C_n$ satisfy:
$$\phi _{SO}(C_n,\lambda)= \lambda \Lambda _{n-1} -16\Lambda _{n-2} -2(\sqrt{8})^n ,$$
where for every $k\geq 3$, $\Lambda _k = \lambda \Lambda _{k-1}-8\Lambda _{k-2}$ with $\Lambda _1=\lambda $ and $\Lambda _2= \lambda ^2 -8$.
\end{theorem}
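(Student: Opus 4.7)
The plan is to mimic the strategy used for paths in Theorem~\ref{thm-path scp}, adapted to the cyclic structure. Since every vertex of $C_n$ has degree~$2$, every nonzero entry of $A_{SO}(C_n)$ equals $\sqrt{8}$, so $\lambda I-A_{SO}(C_n)$ is a ``cyclic tridiagonal'' matrix with $\lambda$ on the diagonal, $-\sqrt{8}$ on the sub- and super-diagonals, and additional entries $-\sqrt{8}$ in positions $(1,n)$ and $(n,1)$. I would Laplace-expand its determinant along the first row, which has exactly three nonzero entries, located in columns $1$, $2$, and $n$. The $(1,1)$-cofactor is simply $\Lambda_{n-1}$, since deleting the first row and column leaves precisely the matrix $M_{n-1}$ defined in Theorem~\ref{thm-path scp}; this produces the term $\lambda\Lambda_{n-1}$.

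The heart of the calculation is the $(1,2)$-minor. I would expand this $(n-1)\times(n-1)$ submatrix along its own first column, which inherits only two nonzero entries from the original matrix: a $-\sqrt{8}$ in the top row (coming from the $(2,1)$-entry) and a $-\sqrt{8}$ in the bottom row (coming from the $(n,1)$-entry). The top entry contributes $-\sqrt{8}\,\Lambda_{n-2}$, because the remaining sub-minor is tridiagonal of the shape $M_{n-2}$. The bottom entry produces a strictly lower triangular $(n-2)\times(n-2)$ block with $-\sqrt{8}$ along its diagonal, whose determinant is $(-\sqrt{8})^{n-2}$; after tracking the cofactor sign, this contributes $-(\sqrt{8})^{n-1}$. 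Hence the $(1,2)$-minor equals $-\sqrt{8}\,\Lambda_{n-2}-(\sqrt{8})^{n-1}$.

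For the $(1,n)$-minor I would invoke the reflection $\sigma\colon i\mapsto n+2-i$ on $\{2,\dots,n\}$ (fixing vertex~$1$), which is an automorphism of $C_n$ and hence preserves $\lambda I-A_{SO}(C_n)$. A short computation of the signs of the induced row and column permutations shows that the $(1,n)$-minor equals $(-1)^n$ times the $(1,2)$-minor. When this is combined with the $(-1)^{1+n}$ sign arising from the first-row cofactor expansion, the two corner contributions merge into $2\sqrt{8}$ times the $(1,2)$-minor, producing
$$\lambda\Lambda_{n-1}+2\sqrt{8}\bigl(-\sqrt{8}\,\Lambda_{n-2}-(\sqrt{8})^{n-1}\bigr)=\lambda\Lambda_{n-1}-16\Lambda_{n-2}-2(\sqrt{8})^n,$$
which is the desired identity. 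The main technical obstacle is careful bookkeeping of cofactor signs, and in particular verifying that the $(\sqrt{8})^n$ contribution survives with coefficient $2$ regardless of the parity of~$n$; a reader who prefers to avoid the symmetry argument can instead compute the $(1,n)$-minor directly by the same two-step expansion used for the $(1,2)$-minor, giving an independent check.
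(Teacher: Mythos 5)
Your proposal is correct and follows essentially the same route as the paper: a cofactor expansion of the cyclic tridiagonal matrix $\lambda I-A_{SO}(C_n)$ along its first row, with the $(1,1)$-cofactor giving $\lambda\Lambda_{n-1}$ and the corner minors evaluated by a further expansion that isolates a copy of $\Lambda_{n-2}$ plus a triangular block of determinant $(-\sqrt{8})^{n-2}$. The only cosmetic difference is that the paper computes the $(1,n)$-minor directly, obtaining $(-\sqrt{8})^{n-1}+(-1)^n(-\sqrt{8})\Lambda_{n-2}$, which is indeed $(-1)^n$ times your $(1,2)$-minor, rather than via your reflection-symmetry shortcut; your signs and the final combination $\lambda\Lambda_{n-1}-16\Lambda_{n-2}-2(\sqrt{8})^n$ all check out.
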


\begin{proof}
Similar to the proof of Theorem \ref{thm-path scp}, for every $k\geq3$, we consider

	\[
    	M_k:=\left(
         	\begin{array}{cccccccc}
         	\lambda & -\sqrt{8}  & 0 & 0 & \ldots & 0 & 0 &   0         \\
			-\sqrt{8} & \lambda & -\sqrt{8} & 0 & \ldots & 0 &  0&   0          \\
         	0& -\sqrt{8} & \lambda & -\sqrt{8} & \ldots & 0 & 0 &   0          \\
			0& 0 & -\sqrt{8} & \lambda &  \ldots &0  & 0  &  0        \\
         \vdots	& \vdots & \vdots & \vdots & \ddots & \vdots & \vdots &  \vdots           \\
			0&  0&0 & 0& \ldots   & \lambda & -\sqrt{8}  &    0        \\						
			0&  0& 0 &0 & \ldots   & -\sqrt{8} & \lambda &   -\sqrt{8}          \\
			0& 0 & 0& 0 & \ldots   & 0 & -\sqrt{8} & \lambda            
        	\end{array}
    	\right)_{k\times k},
	\]
	
	and let $\Lambda _k= det(M_k)$. We have $\Lambda _k = \lambda \Lambda _{k-1}-8\Lambda _{k-2}$. Suppose that $\phi _{SO}(C_n,\lambda)= det (\lambda I - A_{SO}(C_n))$. We have 
	
	\[
    	\phi _{SO}(C_n,\lambda)= det\left(
         	\begin{array}{c|ccccccc}
         	\lambda & -\sqrt{8}  & 0 & 0 & \ldots & 0 & 0 &   -\sqrt{8}         \\
         	\hline
			-\sqrt{8} & & & & & & &             \\
         	0&  & & & & &  &             \\
			0&   & &  & & &  &          \\
         \vdots	&  & &  & M_{n-1} & & &             \\
			0&   & & & & &   &          \\						
			0&  & & & & &  &            \\
			-\sqrt{8}&  & &  &   &  &  &           
        	\end{array}
    	\right)_{n\times n}.
	\]
	
	So,

\begin{align*}
    	\phi _{SO}(P_n,\lambda)&= 	\lambda \Lambda _{n-1}+
    	 \sqrt{8} det\left(
         	\begin{array}{c|cccc}
			-\sqrt{8}   & -\sqrt{8} & 0 & \ldots &  0        \\
			   \hline
           0 &  &  & &             \\
			   \vdots & & M_{n-2} &   &            \\						
			0 & & &  &            \\
			 -\sqrt{8} &   &  &  &            
        	\end{array}
    	\right)\\
    	& \quad+
    	(-1)^{n+1}(-\sqrt{8}) det\left(
         	\begin{array}{c|cccc}
			 - \sqrt{8} &    &   &  &         \\
          0  &  &  & M_{n-2} &             \\
			  \vdots  & & &   &        \\						
			0 & & &  &             \\
			\hline
			 -\sqrt{8}   & 0  & \ldots  & 0 &   -\sqrt{8}       
        	\end{array}
        	\right).        	
\end{align*}

Hence,

\begin{align*}
    	\phi_{SO}(P_n,\lambda)&= 	\lambda \Lambda _{n-1}+
    	 \sqrt{8}\left( -\sqrt{8}\Lambda _{n-2} + (-1)^{n}(-\sqrt{8})^{n-1}
    	 \right)\\
    	& \quad +(-1)^{n+1}(-\sqrt{8})
    	\left( (-\sqrt{8})^{n-1}+(-1)^n(-\sqrt{8})\Lambda _{n-2}
    	\right),     	
\end{align*}

and therefore we have the result.\qed
\end{proof}

Now we consider to star graph $S_n$ and find its Sombor characteristic polynomial and Sombor energy. We need the following Lemma.

\begin{lemma} \label{new}\rm\cite{Cve}
If $M$ is a nonsingular square matrix, then
$$det\left(  \begin{array}{cc}
M&N  \\
P& Q \\
\end{array}\right)=det (M) det( Q-PM^{-1}N).
$$
\end{lemma}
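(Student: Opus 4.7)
The plan is to reduce the determinant on the left to that of a block triangular matrix by a block row operation, after which the determinant factors as a product. Since $M$ is assumed nonsingular, $M^{-1}$ exists, and one may form the block elementary matrix
\[
L := \begin{pmatrix} I & 0 \\ -PM^{-1} & I \end{pmatrix},
\]
which has determinant $1$ (it is block lower triangular with identity diagonal blocks). Multiplying out gives
\[
L \begin{pmatrix} M & N \\ P & Q \end{pmatrix} = \begin{pmatrix} M & N \\ 0 & Q - PM^{-1}N \end{pmatrix},
\]
so taking determinants and using multiplicativity yields the claim provided we know the determinant of a block upper triangular matrix.

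The remaining ingredient I would invoke is the standard fact that for square diagonal blocks $A, D$ (of possibly different sizes),
\[
\det\begin{pmatrix} A & B \\ 0 & D \end{pmatrix} = \det(A)\det(D).
\]
This can be proved by repeated Laplace expansion along the columns that contain the zero block (each such expansion drops the size of $A$ by one and multiplies by the corresponding diagonal entry, until only $D$ remains), or equivalently by first row-reducing $A$ to upper triangular form inside the bigger matrix without touching the block $D$. Applied to our factored form with $A=M$ and $D = Q - PM^{-1}N$, this gives exactly the right-hand side of the lemma.

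The argument is essentially a one-line Schur complement identity, so there is no serious obstacle. The only point that requires care is the nonsingularity hypothesis on $M$: it is used both to define $M^{-1}$ in the elementary matrix $L$ and to make sense of the expression $Q - PM^{-1}N$ on the right-hand side. Without it the identity fails in general (though a perturbation/continuity argument recovers a version of it), so this hypothesis should be stated explicitly, as the lemma does.
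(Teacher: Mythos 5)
Your argument is correct: the block elimination by $L=\left(\begin{smallmatrix} I & 0 \\ -PM^{-1} & I\end{smallmatrix}\right)$, the multiplicativity of the determinant, and the block-triangular determinant formula together give exactly the Schur complement identity claimed. Note, however, that the paper offers no proof to compare against --- it states this lemma as a known result imported from the reference on graph spectra --- so your write-up simply supplies the standard textbook justification, and it does so without any gaps.
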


\begin{theorem}
For $n\geq 2$,
\begin{itemize}
\item[(i)] The Sombor characteristic polynomial of the star graph $S_n=K_{1,n-1}$ is
$$\phi_{SO}(S_n,\lambda))=\lambda^{n-2}\left(\lambda ^2 -(n-1)(n^2-2n+2)\right).$$
\item[(ii)] The Sombor energy of $S_n$ is
$$En_{SO}(S_n)=2\sqrt{(n-1)(n^2-2n+2)}.$$
\end{itemize}
\end{theorem}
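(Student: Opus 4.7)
The plan is to label the center of $S_n$ as $v_1$ with $d_1 = n-1$ and the leaves $v_2,\dots,v_n$ each with degree $1$. Then every nonzero entry of $A_{SO}(S_n)$ equals $c := \sqrt{(n-1)^2 + 1} = \sqrt{n^2-2n+2}$, so the Sombor matrix is just $c$ times the usual adjacency matrix of the star. Consequently $\lambda I - A_{SO}(S_n)$ has the block form
\[
\lambda I - A_{SO}(S_n) \;=\; \left(\begin{array}{c|c} \lambda & -c\,\mathbf{1}^{\top} \\ \hline -c\,\mathbf{1} & \lambda I_{n-1} \end{array}\right),
\]
where $\mathbf{1}$ is the all-ones column of length $n-1$.

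To get the characteristic polynomial I would apply Lemma~\ref{new} with $M = (\lambda)$ (nonsingular for $\lambda \neq 0$), $N = -c\,\mathbf{1}^{\top}$, $P = -c\,\mathbf{1}$, and $Q = \lambda I_{n-1}$. The Schur complement is
\[
Q - P M^{-1} N \;=\; \lambda I_{n-1} - \tfrac{c^{2}}{\lambda} J_{n-1},
\]
where $J_{n-1} = \mathbf{1}\mathbf{1}^{\top}$ is the all-ones matrix. Since $J_{n-1}$ has eigenvalues $n-1$ (simple) and $0$ (multiplicity $n-2$), one reads off
\[
\det\!\bigl(\lambda I_{n-1} - \tfrac{c^{2}}{\lambda} J_{n-1}\bigr) \;=\; \bigl(\lambda - \tfrac{(n-1)c^{2}}{\lambda}\bigr)\,\lambda^{\,n-2}.
\]
Multiplying by $\det(M) = \lambda$ then gives
\[
\phi_{SO}(S_n,\lambda) \;=\; \lambda^{\,n-1}\bigl(\lambda - \tfrac{(n-1)c^{2}}{\lambda}\bigr) \;=\; \lambda^{\,n-2}\bigl(\lambda^{2} - (n-1)(n^{2}-2n+2)\bigr),
\]
which is (i); the boundary case $\lambda = 0$ is then covered by continuity of the polynomial.

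For part (ii) the roots of $\phi_{SO}(S_n,\lambda)$ are $0$ with multiplicity $n-2$ together with $\pm\sqrt{(n-1)(n^{2}-2n+2)}$, so summing absolute values immediately yields $En_{SO}(S_n) = 2\sqrt{(n-1)(n^{2}-2n+2)}$.

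The only mildly delicate point is justifying the use of Lemma~\ref{new} at $\lambda = 0$, where $M$ is singular; this is painless because both sides are polynomials in $\lambda$ that agree on the cofinite set $\lambda \neq 0$, so they agree everywhere. Everything else is a routine block-determinant computation once the observation that $A_{SO}(S_n) = c\,A(S_n)$ is made.
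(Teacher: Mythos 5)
Your proposal is correct and follows essentially the same route as the paper: observe that $A_{SO}(S_n)=\sqrt{n^2-2n+2}$ times the adjacency matrix of the star, apply Lemma~\ref{new} to the natural $1\times(n-1)$ block partition, and evaluate the Schur-complement determinant via the eigenvalues of $J_{n-1}$. Your extra remark justifying the $\lambda=0$ case by polynomial identity is a small tidiness the paper omits, but it does not change the argument.
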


\begin{proof}
\begin{enumerate}
\item[(i)] One can easily check that the Sombor matrix of $K_{1,n-1}$ is
$$A_{SO}(S_n)=\sqrt{n^2-2n+2}\left( \begin{array}{cc}
0_{1\times 1}&J_{1\times {n-1}} \\
J_{{n-1}\times 1}&0_{{n-1}\times {n-1}}   \\
\end{array} \right).$$
We  have 
$$det(\lambda I - A_{SO}(S_n))=det
\left(  \begin{array}{cc}
\lambda  & -\sqrt{n^2-2n+2}J_{1\times (n-1)}  \\
-\sqrt{n^2-2n+2}J_{(n-1)\times 1}& \lambda I_{n-1} \\
\end{array}\right).
$$
Using Lemma \ref{new},
$$
det(\lambda I -A_{SO}(S_n))=\lambda det( \lambda I_{n-1} - \sqrt{n^2-2n+2}J_{(n-1)\times 1}\frac{1}{\lambda} \sqrt{n^2-2n+2}J_{1\times (n-1)}).
$$

Since $J_{(n-1)\times 1}J_{1\times (n-1)}=J_{n-1}$, therefore
\begin{align*}
det(\lambda I - A_{SO}(S_n))&=\lambda det( \lambda I_{n-1} - \frac{1}{\lambda}(n^2-2n+2)J_{n-1})\\
&=\lambda ^{2-n}
det( \lambda ^2 I_{n-1} - (n^2-2n+2)J_{n-1}).
\end{align*}
On the other hand, the eigenvalues of $J_{n-1}$ are $n-1$ (once) and 0 ($n-2$ times),  the eigenvalues of $(n^2-2n+2)J_{n-1}$ are $(n-1)(n^2-2n+2)$ (once) and 0 ($n-2$ times). Hence
$$\phi_{SO}(S_n,\lambda))=\lambda^{n-2}\left(\lambda ^2 -(n-1)(n^2-2n+2)\right).$$

 \item[(ii)] It follows from Part (i).\quad\qed

 \end{enumerate}
\end{proof}

Here we shall investigate the Sombor energy of complete graphs.

\begin{theorem}
For $n\geq 2$,
\begin{itemize}
\item[(i)] The Sombor characteristic polynomial of complete graph $K_n$ is
$$\phi_{SO}(K_n,\lambda)=(\lambda -(n-1)^2\sqrt{2})(\lambda +(n-1)\sqrt{2})^{n-1}.$$
\item[(ii)] The Sombor energy of $K_n$ is
$$En_{SO}(K_n)=2(n-1)^2\sqrt{2}.$$
\end{itemize}
\end{theorem}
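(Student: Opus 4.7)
The plan is to exploit the fact that $K_n$ is regular, so the Sombor matrix is a constant multiple of the usual adjacency matrix, and therefore its spectrum is immediately obtained from the well-known spectrum of $A(K_n)$.

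First I would observe that every vertex of $K_n$ has degree $n-1$, so for each edge $v_iv_j$ the Sombor weight is $\sqrt{(n-1)^2+(n-1)^2}=(n-1)\sqrt{2}$. Consequently
\[
A_{SO}(K_n)=(n-1)\sqrt{2}\,A(K_n)=(n-1)\sqrt{2}\,(J_n-I_n),
\]
where $J_n$ is the all-ones matrix. Next I would recall that the eigenvalues of $J_n-I_n$ are $n-1$ (with multiplicity $1$, eigenvector the all-ones vector) and $-1$ (with multiplicity $n-1$, eigenspace the orthogonal complement). Multiplying by the scalar $(n-1)\sqrt{2}$ gives the eigenvalues of $A_{SO}(K_n)$ as
\[
\rho_1=(n-1)^2\sqrt{2}\quad\text{and}\quad \rho_2=\cdots=\rho_n=-(n-1)\sqrt{2}.
\]

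From this, part (i) follows directly since
\[
\phi_{SO}(K_n,\lambda)=\prod_{i=1}^n(\lambda-\rho_i)=\bigl(\lambda-(n-1)^2\sqrt{2}\bigr)\bigl(\lambda+(n-1)\sqrt{2}\bigr)^{n-1}.
\]
For part (ii), summing absolute values gives
\[
En_{SO}(K_n)=(n-1)^2\sqrt{2}+(n-1)\cdot(n-1)\sqrt{2}=2(n-1)^2\sqrt{2}.
\]

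There is essentially no obstacle here; the whole argument rests on recognizing that on a regular graph the Sombor matrix is proportional to the adjacency matrix. The only thing to be careful about is bookkeeping the multiplicities correctly when passing from $J_n-I_n$ to $A_{SO}(K_n)$ and when computing the sum of absolute values.
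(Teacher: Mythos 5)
Your proposal is correct and follows essentially the same route as the paper: both identify $A_{SO}(K_n)=(n-1)\sqrt{2}\,(J_n-I_n)$ and then read off the spectrum from the known eigenvalues of the all-ones matrix (the paper phrases this via the eigenvalues of $J_n$ inside a determinant, you via the eigenvalues of $J_n-I_n$ directly, which is the same computation). No issues.
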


\begin{proof}
\begin{enumerate}
\item[(i)]
The Sombor matrix of $K_n$ is $(n-1)\sqrt{2}(J-I)$. Therefore
$$
\phi_{SO}(K_n,\lambda)=det(\lambda I- (n-1)\sqrt{2}J+ (n-1)\sqrt{2}I)=det((\lambda +(n-1)\sqrt{2})I- (n-1)\sqrt{2}J).
$$

Since the eigenvalues of $J_n$ are $n$ (once) and 0 ($n-1$ times),  the eigenvalues of $(n-1)\sqrt{2}J_n$ are $n(n-1)\sqrt{2}$ (once) and 0 ($n-1$ times). Therefore
$$\phi_{SO}(K_n,\lambda)=(\lambda -(n-1)^2\sqrt{2})(\lambda +(n-1)\sqrt{2})^{n-1}.$$

\item[(ii)] It follows from Part (i).\quad\qed
\end{enumerate}
\end{proof}

We end this section by finding Sombor characteristic polynomial of complete bipartite graphs and their Sombor energy.

\begin{theorem}\label{thm-bipartite}
For natural number $m,n\neq 1$,
\begin{itemize}
\item[(i)] The Randi\'{c} characteristic polynomial of complete bipartite graph $K_{m,n}$ is
$$\phi_{SO}(K_{m,n},\lambda)=\lambda^{m+n-2}(\lambda^2 -mn(m^2+n^2)).$$
\item[(ii)] The Randi\'{c} energy of $K_{m,n}$ is
$2\sqrt{mn(m^2+n^2)}$.
\end{itemize}
\end{theorem}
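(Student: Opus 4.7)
The plan is to mirror the star-graph argument, extended to a general block structure. In $K_{m,n}$ each vertex of the $m$-part has degree $n$ and each vertex of the $n$-part has degree $m$, so every edge contributes the same weight $\sqrt{m^2+n^2}$ to the Sombor matrix. Labeling the vertices with the $m$-part first, I would write
\[
A_{SO}(K_{m,n}) = \sqrt{m^2+n^2}\,
\begin{pmatrix} 0_{m\times m} & J_{m\times n} \\ J_{n\times m} & 0_{n\times n} \end{pmatrix},
\]
so that $\lambda I - A_{SO}(K_{m,n})$ inherits the same $2\times 2$ block form.

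Next I would apply Lemma \ref{new} with the nonsingular block $M=\lambda I_m$ (working over the field of rational functions in $\lambda$, so $M$ is invertible). This yields
\[
\phi_{SO}(K_{m,n},\lambda)=\lambda^{m}\det\!\Bigl(\lambda I_n - \tfrac{m^2+n^2}{\lambda}\, J_{n\times m}J_{m\times n}\Bigr).
\]
The key simplification is $J_{n\times m}J_{m\times n}=m\,J_n$, after which pulling a factor $1/\lambda^n$ out of the $n\times n$ determinant reduces the computation to $\lambda^{m-n}\det(\lambda^2 I_n - m(m^2+n^2)J_n)$.

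To finish part (i), I would invoke the fact already used in the complete and star-graph proofs: $J_n$ has eigenvalues $n$ (once) and $0$ (with multiplicity $n-1$), hence $m(m^2+n^2)J_n$ has eigenvalues $mn(m^2+n^2)$ and $0$ with the same multiplicities. Therefore
\[
\det(\lambda^2 I_n - m(m^2+n^2)J_n)=\lambda^{2(n-1)}\bigl(\lambda^2-mn(m^2+n^2)\bigr),
\]
and multiplying by the outside factor $\lambda^{m-n}$ collapses to $\lambda^{m+n-2}(\lambda^2-mn(m^2+n^2))$, the claimed polynomial. For part (ii), the nonzero eigenvalues are $\pm\sqrt{mn(m^2+n^2)}$ and the remaining $m+n-2$ eigenvalues are $0$, so summing absolute values gives the stated Sombor energy $2\sqrt{mn(m^2+n^2)}$.

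I do not anticipate a genuine obstacle: the only step that requires care is the bookkeeping of powers of $\lambda$ when the block $M=\lambda I_m$ is inverted, and the observation that the argument is valid as an identity of polynomials even though $\lambda=0$ is not in the domain of the Schur-complement formula (one may either appeal to continuity/density in $\lambda$ or verify both sides agree as polynomial identities).
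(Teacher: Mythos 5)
Your proposal is correct and follows essentially the same route as the paper's own proof: the same block form of $A_{SO}(K_{m,n})$, the same application of Lemma \ref{new} with $M=\lambda I_m$, the identity $J_{n\times m}J_{m\times n}=mJ_n$, and the spectrum of $J_n$ to finish. The only addition is your remark justifying the Schur-complement step as a polynomial identity, which the paper leaves implicit.
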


\begin{proof}
\begin{enumerate}
\item[(i)]
It is easy to see that the Sombor matrix of $K_{m,n}$ is
$\sqrt{m^2+n^2}\left( \begin{array}{cc}
0_{m\times m}&J_{m\times n} \\
J_{n\times m}&0_{n\times n}   \\
\end{array} \right)$.
Using Lemma \ref{new} we have 

$$det(\lambda I -A_{SO}(K_{m,n}))=det
\left(  \begin{array}{cc}
\lambda I_m & -\sqrt{m^2+n^2}J_{m\times n}  \\
-\sqrt{m^2+n^2}J_{n\times m}& \lambda I_n \\
\end{array}\right).
$$
So
$$
det(\lambda I -A_{SO}(K_{m,n}))=det (\lambda I_m) det( \lambda I_n - \sqrt{m^2+n^2}J_{n\times m}\frac{1}{\lambda}I_m \sqrt{m^2+n^2}J_{m\times n}).
$$

We know that $J_{n\times m}J_{m\times n}=mJ_n$. Therefore
\begin{align*}
det(\lambda I -A_{SO}(K_{m,n}))&=\lambda ^m det( \lambda I_n - \frac{1}{\lambda }m(m^2+n^2)J_n)\\
&=\lambda ^{m-n}
det( \lambda ^2 I_n - m(m^2+n^2)J_n).
\end{align*}
The eigenvalues of $J_n$ are $n$ (once) and 0 ($n-1$ times). So the eigenvalues of $m(m^2+n^2)J_n$ are $mn(m^2+n^2)$ (once) and 0 ($n-1$ times).
Hence
$$\phi_{SO}(K_{m,n},\lambda)=\lambda^{m+n-2}(\lambda^2 -mn(m^2+n^2)).$$

\item[(ii)] It follows from Part (i).\quad\qed
\end{enumerate}

\end{proof}

\section{Sombor energy of 2-regular and 3-regular graphs}

In this section  we consider $2$-regular and $3$-regular graphs. As a beginning of this section,  we have the following easy lemma:

\begin{lemma}\label{union}
Let $G=G_1\cup G_2\cup G_3\cup \ldots\cup G_n$. Then
\begin{enumerate}
\item[(i)] $\phi_{SO}(G)=\prod_{i=1}^{n}\phi_{SO}(G_i)$.

\item[(ii)] $En_{SO}(G)=\sum_{i=1}^{n}En_{SO}(G_i)$.
\end{enumerate}
\end{lemma}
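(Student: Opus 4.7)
The plan is to exploit the block-diagonal structure that the disjoint-union operation forces on the Sombor matrix. I would do the case $n=2$ in detail and then obtain the general statement by a routine induction on $n$.

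First I would observe the crucial degree-preservation fact: since no vertex in $G_1$ is joined by an edge to any vertex in $G_2$ in the disjoint union $G=G_1\cup G_2$, the degree of every vertex $v\in V(G_i)$ computed in $G$ equals its degree computed in $G_i$. This is exactly what is needed so that the $(i,j)$-entry $\sqrt{d_i^2+d_j^2}$ of $A_{SO}(G)$ coincides, for two vertices in the same component, with the corresponding entry of $A_{SO}(G_i)$; and for two vertices in different components, the entry is $0$ because they are non-adjacent. Consequently, after ordering the vertices of $G$ component by component, one has the block-diagonal form
\[
A_{SO}(G)=\begin{pmatrix} A_{SO}(G_1) & 0 \\ 0 & A_{SO}(G_2) \end{pmatrix}.
\]

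For part (i), this immediately yields
\[
\lambda I - A_{SO}(G)=\begin{pmatrix} \lambda I - A_{SO}(G_1) & 0 \\ 0 & \lambda I - A_{SO}(G_2) \end{pmatrix},
\]
and the standard block-diagonal determinant identity gives $\phi_{SO}(G,\lambda)=\phi_{SO}(G_1,\lambda)\,\phi_{SO}(G_2,\lambda)$. Induction on the number $n$ of components, with the inductive step applied to $G_1\cup\cdots\cup G_{n-1}$ and $G_n$, then yields the claimed product formula.

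For part (ii), the spectrum of a block-diagonal matrix is the multiset union of the spectra of its blocks, so the eigenvalues of $A_{SO}(G)$ are exactly the disjoint union (with multiplicity) of the eigenvalues of the $A_{SO}(G_i)$. Taking absolute values and summing gives $En_{SO}(G)=\sum_{i=1}^{n}En_{SO}(G_i)$; alternatively, this also follows formally from part (i) by reading off the roots of $\phi_{SO}(G,\lambda)$. There is no real obstacle here — the only thing one must be careful about is the degree-preservation remark at the outset, since the Sombor entries depend on the ambient graph through the degrees; once that is pointed out, everything is standard linear algebra.
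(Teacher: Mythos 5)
Your proof is correct and complete; the paper states this lemma without any proof (calling it ``easy''), and your argument---degree preservation under disjoint union, the resulting block-diagonal form of $A_{SO}(G)$, the determinant product formula, and the union of spectra---is exactly the standard justification one would supply. The one genuinely worthwhile observation you make explicit, namely that vertex degrees (and hence the Sombor matrix entries) are unchanged by taking disjoint unions, is precisely the point that distinguishes this from the purely formal adjacency-matrix case, and you handle it correctly.
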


As an immediate result of Lemma \ref{union}, we have the  following results:

\begin{proposition}
\begin{enumerate}
\item[(i)] If $e=v_rv_{r+1}\in E(P_n)$, then $ En_{SO}(P_n-e)= En_{SO}(P_r)+ En_{SO}(P_s)$, where $r+s=n$.

\item[(ii)] If $e\in E(C_n)$, ($n\geq 3$), then
$En_{SO}(C_n-e)=En_{SO}(P_n).$

\item[(iii)] Let $S_n$ be the star on $n$ vertices and $e\in E(S_n)$. Then for any $n\geq 3$,
$$En_{SO}(S_n-e)=En_{SO}(S_{n-1}).$$
\end{enumerate}
\end{proposition}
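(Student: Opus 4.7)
The plan is to verify, for each part, that the graph obtained by edge deletion decomposes as a disjoint union whose components are isomorphic to the claimed graphs, and moreover that the vertex degrees in each component agree with those in the reference graph. Once this matching of degrees is established, the corresponding Sombor matrices coincide as block-diagonal matrices, and the result follows immediately from Lemma \ref{union}(ii).

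For (i), first observe that $P_n - e$ with $e = v_r v_{r+1}$ consists of two connected components, one on the vertices $\{v_1, \ldots, v_r\}$ and one on $\{v_{r+1}, \ldots, v_n\}$, which are isomorphic as graphs to $P_r$ and $P_s$ respectively. Deleting $e$ reduces the degrees of $v_r$ and $v_{r+1}$ each by $1$, turning them into endpoints (degree $1$) of their respective paths, while all other degrees are unchanged. Hence every vertex in the component on $\{v_1, \ldots, v_r\}$ has the same degree as the corresponding vertex in $P_r$, and analogously for $P_s$. Therefore the Sombor entries $\sqrt{d_i^2+d_j^2}$ in $P_n-e$ agree with those in $P_r$ and $P_s$, so $A_{SO}(P_n - e)$ is block-diagonal with blocks $A_{SO}(P_r)$ and $A_{SO}(P_s)$, and the claim follows. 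The edge cases $r=1$ or $r=n-1$ are trivial since $En_{SO}(P_1) = 0$.

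For (ii), note that $C_n - e$ remains connected and is isomorphic as a graph to $P_n$: the two endpoints of $e$ become the new endpoints of the path with degree $1$, and all other vertices retain degree $2$. Since the degree sequences agree, $A_{SO}(C_n - e)$ coincides with $A_{SO}(P_n)$ up to a permutation of rows and columns, so their Sombor spectra and energies are equal. For (iii), removing an edge $e$ from $S_n$ disconnects the leaf incident to $e$ from the center, leaving $S_{n-1}$ together with an isolated vertex. The center's degree drops from $n-1$ to $n-2$, matching its degree in $S_{n-1}$, and the remaining leaves keep degree $1$. Thus $A_{SO}(S_n - e)$ is the direct sum of $A_{SO}(S_{n-1})$ and a $1 \times 1$ zero block contributing $0$ to the energy, and Lemma \ref{union}(ii) finishes the argument.

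The only genuine subtlety, and the step one must verify with care, is the degree-matching: because the Sombor matrix entries depend on the degrees of the current graph rather than on the ambient graph, one must check at each step that edge deletion produces components whose degree sequences exactly match those of the reference graphs $P_r$, $P_s$, $P_n$, or $S_{n-1}$. Once that is confirmed, each statement reduces to a one-line application of Lemma \ref{union}.
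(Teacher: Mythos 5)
Your proof is correct and follows exactly the route the paper intends: the paper states this proposition without proof as an immediate consequence of Lemma \ref{union}, and your argument simply makes explicit the decomposition into components and the degree-matching check that justifies applying that lemma. The attention you pay to verifying that degrees (and hence the Sombor matrix entries) are preserved under edge deletion is the right subtlety to flag, since the Sombor matrix depends on degrees in the current graph.
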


Now consider to the $2$-regulars. Every $2$-regular graph is a disjoint union of cycles. By Theorem \ref{thm-cycle}, we can find all the eigenvalues of Sombor matrix of cycle graphs. Therefore by Lemma \ref{union}, we can find Sombor characteristic polynomial and Sombor energy of 2-regular graphs. Now, we consider to  the characteristic polynomial of $3$-regular graphs of order $10$. Also we shall compute sombor energy  of this class of graphs. There are exactly $21$ cubic graphs of  order $10$ given in
Figure \ref{cubic} (see \cite{reza}).

\newpage

\bigskip
\bigskip

Using Maple we computed the Sombor characteristic polynomials of $3$-regular  graphs of order $10$ in Table 1.

\begin{figure}[!h]
\hglue1.2cm
\includegraphics[width=12cm,height=5.3cm]{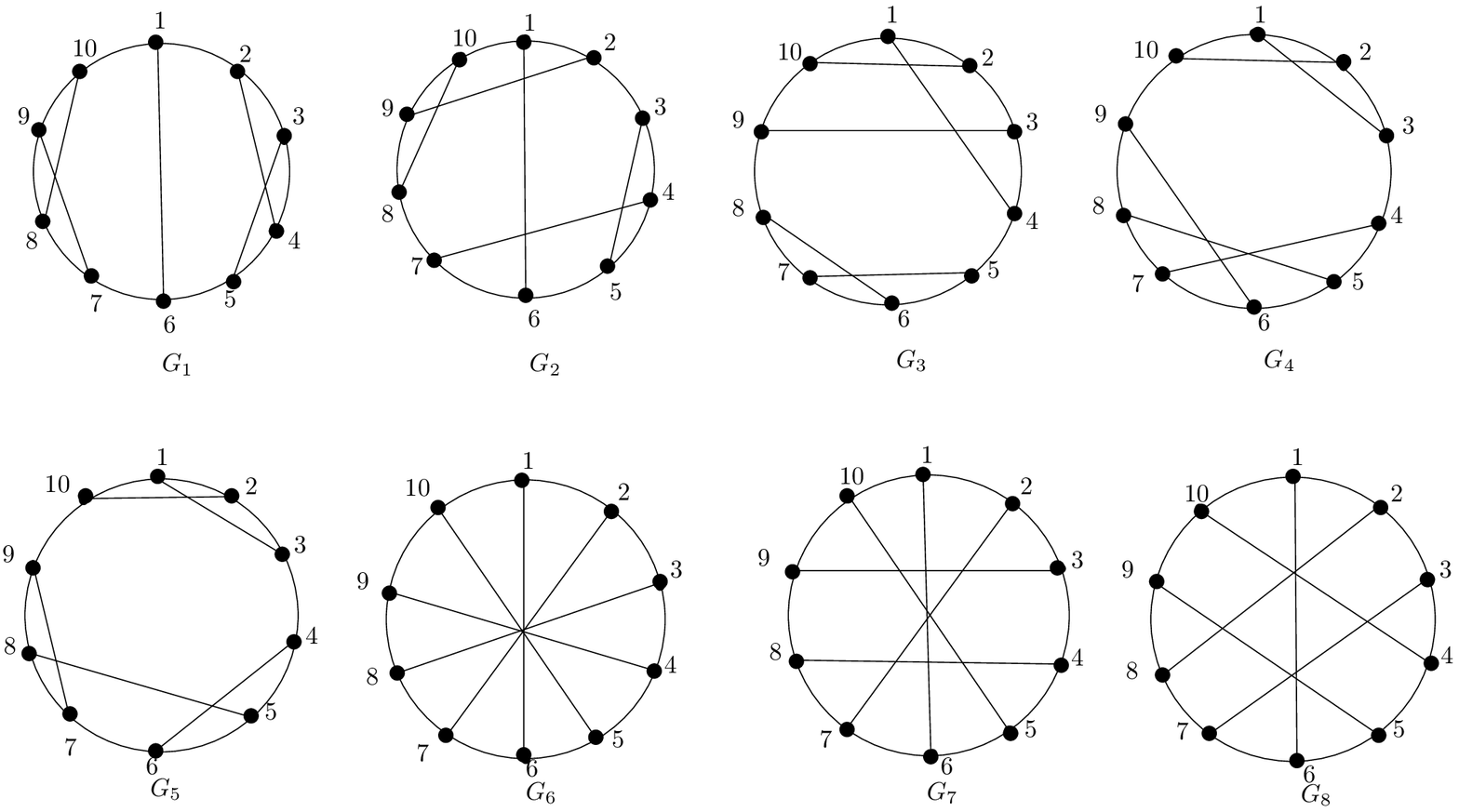}
\vglue5pt
\hglue1.2cm
\includegraphics[width=12cm,height=5.3cm]{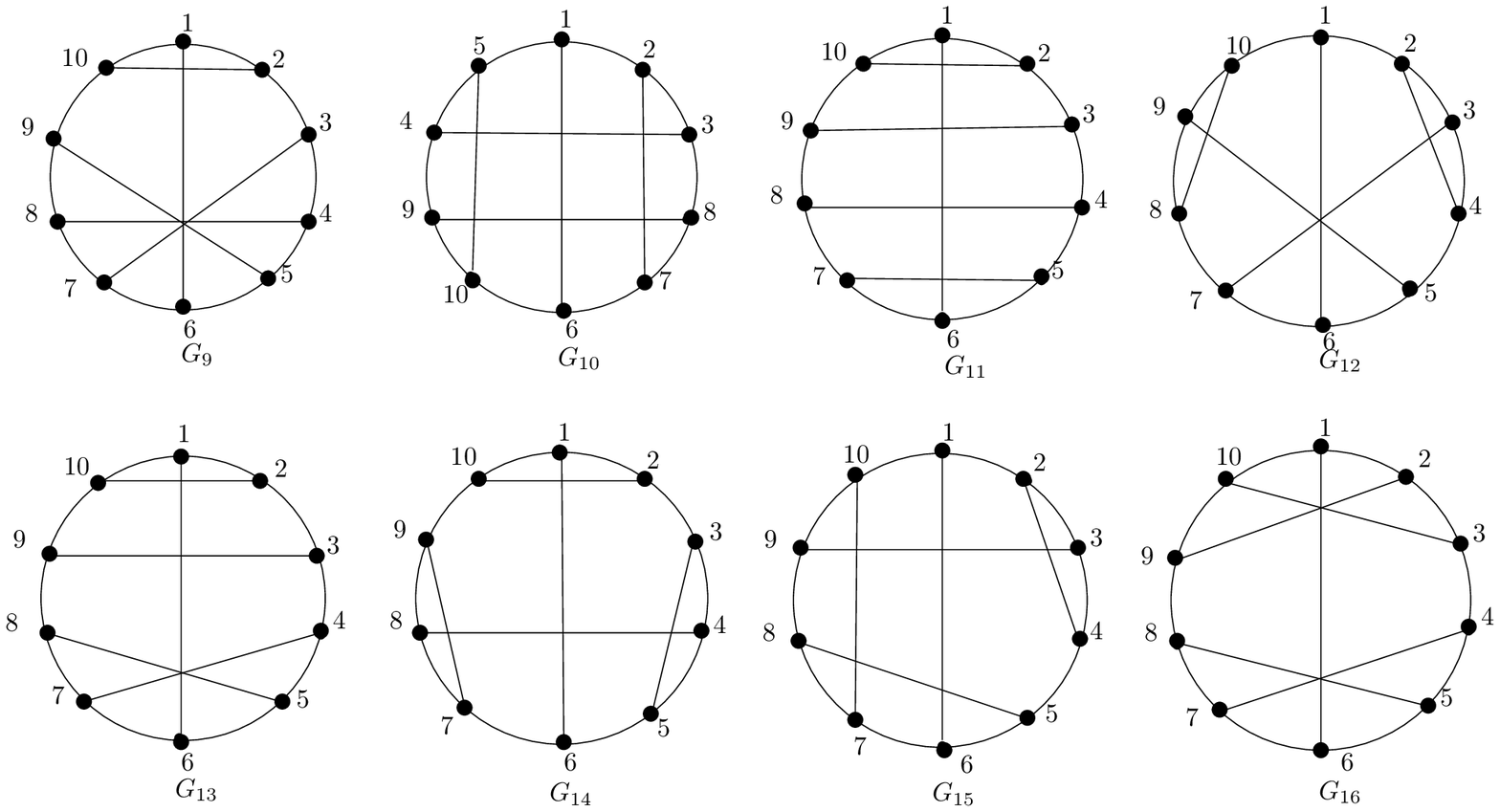}
\hglue1.2cm
\includegraphics[width=11.7cm,height=5.2cm]{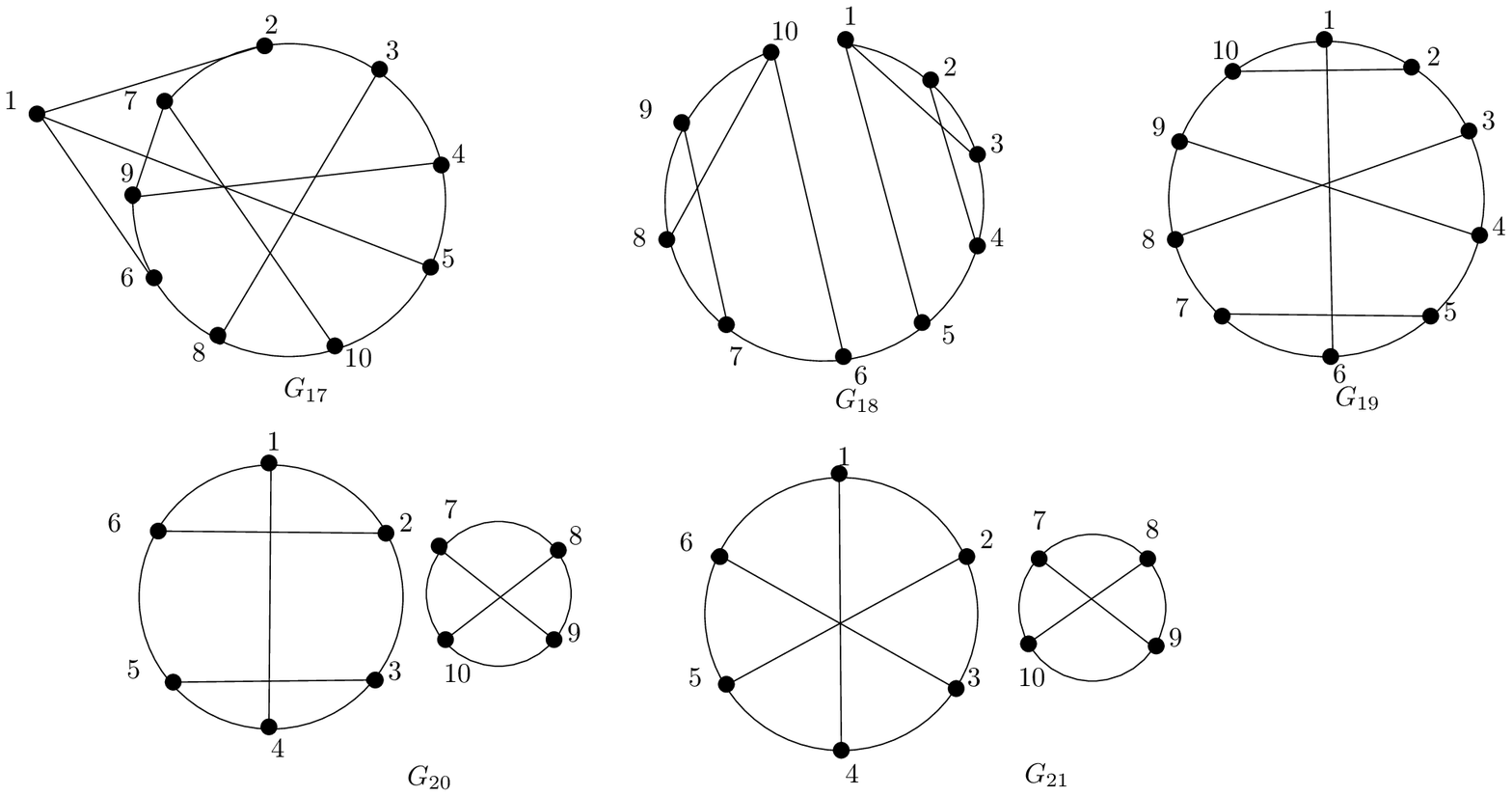}
\vglue-10pt \caption{\label{cubic} Cubic graphs of order 10.}
\end{figure}

 	By finding  the roots of Sombor characteristic polynomial of cubic  graphs of order $10$, we can have the Sombor energy of these graphs.  We compute them to three decimal places. So we have them in table 2.

\begin{center}
\begin{footnotesize}
\begin{tabular} {|c|c|}
\hline
$G_i$ &  $P(G_i,\lambda)$ \\
 \hline $G_1$  & $\lambda ^{10} - 270\lambda^8 - 432\sqrt{2}\lambda^7 +23004\lambda^6+62208\sqrt{2} \lambda ^5 -589032\lambda^4 -1819584\sqrt{2} \lambda^3 +4618944\lambda^2 $ \\ & $   + 15116544\sqrt{2}\lambda$  \\
 \hline $G_2$ & $\lambda^{10} - 270\lambda^8 - 216\sqrt{2}\lambda^7 + 23004\lambda^6 + 27216\sqrt{2}\lambda^5 - 705672\lambda^4 - 839808\sqrt{2}\lambda^3 + 6718464\lambda^2$ \\
 & $ + 7558272\sqrt{2}\lambda$\\
 \hline $G_3$ & $\lambda^{10} - 270\lambda^8 - 324\sqrt{2}\lambda^7 + 22356\lambda^6 + 46656\sqrt{2}\lambda^5 - 559872\lambda^4 - 1329696\sqrt{2}\lambda^3 + 3149280\lambda^2 $ \\
 & $+ 8188128\sqrt{2}\lambda + 5668704$ \\
  \hline $G_4$ & $\lambda^{10} - 270\lambda^8 - 216\sqrt{2}\lambda^7 + 20412\lambda^6 + 34992\sqrt{2}\lambda^5 - 355752\lambda^4 - 979776\sqrt{2}\lambda^3 - 1259712\lambda^2$  \\
 \hline $G_5$  & $\lambda^{10} - 270\lambda^8 - 432\sqrt{2}\lambda^7 + 23004\lambda^6 + 66096\sqrt{2}\lambda^5 - 542376\lambda^4 - 2309472\sqrt{2}\lambda^3 - 3779136\lambda^2$ \\
  \hline $G_6$ & $\lambda^{10} - 270\lambda^8 + 21060\lambda^6 - 612360\lambda^4 + 5773680\lambda^2 - 17006112$ \\
  \hline $G_7$ &  $\lambda^{10} -270\lambda^8 + 22356\lambda^6 - 11664\sqrt{2}\lambda^5 - 682344\lambda^4 + 629856\sqrt{2}\lambda^3 + 6193584\lambda^2 - 3779136\sqrt{2}\lambda$\\
  & $- 17006112$ \\
  \hline $G_8$ & $\lambda^{10}  - 270\lambda^8 + 23004\lambda^6 - 15552\sqrt{2}\lambda^5 - 775656\lambda^4 + 1119744\sqrt{2}\lambda^3 + 7978176\lambda^2 - 15116544\sqrt{2}\lambda$ \\
  \hline $G_9$ & $\lambda^{10}   - 270\lambda^8 - 108\sqrt{2}\lambda^7 + 23004\lambda^6 + 7776\sqrt{2}\lambda^5 - 769824\lambda^4 - 34992\sqrt{2}\lambda^3 + 9552816\lambda^2$ \\
  & $- 2519424\sqrt{2}\lambda - 22674816$ \\
  \hline $G_{10}$ & $\lambda^{10}   - 270\lambda^8 + 21060\lambda^6 - 3888\sqrt{2}\lambda^5 - 495720\lambda^4 - 349920\sqrt{2}\lambda^3 + 3674160\lambda^2 + 6298560\sqrt{2}\lambda$ \\
  & $+ 5668704$ \\
  \hline $G_{11}$ &  $\lambda^{10}  -270\lambda^8 - 216\sqrt{2}\lambda^7 + 22356\lambda^6 + 31104\sqrt{2}\lambda^5 - 612360\lambda^4 - 1119744\sqrt{2}\lambda^3 + 2414448\lambda^2 $\\
  & $+ 6298560\sqrt{2}\lambda + 5668704$\\
  \hline $G_{12}$ & $\lambda^{10}   - 270\lambda^8 - 216\sqrt{2}\lambda^7 + 24300\lambda^6 + 23328\sqrt{2}\lambda^5 - 915624\lambda^4 - 629856\sqrt{2}\lambda^3 + 15116544\lambda^2$ \\
  & $+ 5038848\sqrt{2}\lambda - 90699264$\\
  \hline $G_{13}$ & $\lambda^{10}   - 270\lambda^8 - 108\sqrt{2}\lambda^7 + 21708\lambda^6 + 11664\sqrt{2}\lambda^5 - 559872\lambda^4 - 384912\sqrt{2}\lambda^3 + 3674160\lambda^2$ \\
  & $+ 3779136\sqrt{2}\lambda$\\
  \hline $G_{14}$ &  $\lambda^{10}   -270\lambda^8 - 324\sqrt{2}\lambda^7 + 24300\lambda^6 + 46656\sqrt{2}\lambda^5 - 839808\lambda^4 - 1994544\sqrt{2}\lambda^3 + 7873200\lambda^2$\\
  & $+ 21415104\sqrt{2}\lambda + 22674816$\\
  \hline $G_{15}$ & $\lambda^{10}   -270\lambda^8 - 108\sqrt{2}\lambda^7 + 22356\lambda^6 + 11664\sqrt{2}\lambda^5 - 676512\lambda^4 - 419904\sqrt{2}\lambda^3 + 5668704\lambda^2$ \\
  & $+ 8188128\sqrt{2}\lambda + 5668704$\\
  \hline $G_{16}$ & $\lambda^{10}   -270\lambda^8 + 20412\lambda^6 - 495720\lambda^4 + 3779136\lambda^2$ \\
  \hline $G_{17}$ & $\lambda^{10}  -270\lambda^8 + 24300\lambda^6 - 23328\sqrt{2}\lambda^5 - 962280\lambda^4 + 2099520\sqrt{2}\lambda^3 + 12597120\lambda^2 - 50388480\sqrt{2}\lambda$ \\
  & $+ 9069926$\\
  \hline $G_{18}$ & $\lambda^{10}  -270\lambda^8 - 432\sqrt{2}\lambda^7 + 20412\lambda^6 + 62208\sqrt{2}\lambda^5 - 215784\lambda^4 - 979776\sqrt{2}\lambda^3 - 1259712\lambda^2$ \\
  \hline $G_{19}$ & $\lambda^{10}  -270\lambda^8 - 216\sqrt{2}\lambda^7 + 23652\lambda^6 + 27216\sqrt{2}\lambda^5 - 822312\lambda^4 - 909792\sqrt{2}\lambda^3 + 10392624\lambda^2$ \\
  & $+ 5038848\sqrt{2}\lambda - 39680928$\\
  \hline $G_{20}$ & $\lambda^{10}  -270\lambda^8 - 648\sqrt{2}\lambda^7 + 20412\lambda^6 + 93312\sqrt{2}\lambda^5 - 75816\lambda^4 - 1469664\sqrt{2}\lambda^3 - 3779136\lambda^2$ \\
  \hline $G_{21}$ & $\lambda^{10}  -270\lambda^8 - 432\sqrt{2}\lambda^7 + 16524\lambda^6 + 69984\sqrt{2}\lambda^5 + 157464\lambda^4$ \\ \hline
  \end{tabular}
\end{footnotesize}
\end{center}
\begin{center}
{Table 1.} Sombor characteristic polynomial $P(G_i,\lambda)$, for $1\leq i \leq 21$.
\end{center}

\begin{center}
\begin{footnotesize}
\small
\begin{tabular}{|c|c|||c|c|||c|c|} \hline
$G_i$ & $En_{SO}(G_i)$ &$G_i$ & $En_{SO}(G_i)$ &$G_i$ & $En_{SO}(G_i)$ \\
\hline
\hline $G_1$ & 64.161 &  $G_8$ & 64.161  & $G_{15}$ & 62.767  \\
\hline $G_2$ & 63.043 &  $G_9$ & 64.981   & $G_{16}$ & 59.396  \\
\hline $G_3$ & 62.880 &  $G_{10}$  & 61.399  & $G_{17}$ & 67.882   \\
\hline $G_4$ & 57.336 &  $G_{11}$ & 62.375  & $G_{18}$ &  57.517 \\
\hline $G_5$ & 60.638 &  $G_{12}$ & 67.882 & $G_{19}$  & 66.096  \\
\hline $G_6$ & 63.403 &  $G_{13}$  & 61.000 & $G_{20}$ &  59.396 \\
\hline $G_7$ & 63.969 &  $G_{14}$  & 65.835 & $G_{21}$  & 50.911  \\
\hline
\end{tabular}
\end{footnotesize}
\end{center}
\begin{center}
{Table 2.} Sombor energy of cubic graphs of order $10$.
\end{center}

\begin{proposition}\label{prop-unique}
Six cubic graphs of order $10$   are not  ${\cal EN_{SO}}$-unique.
\end{proposition}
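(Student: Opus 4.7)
The plan is to read the three required coincidences directly off the numerical Table 2 of Sombor energies. The $21$ graphs $G_1,\ldots,G_{21}$ are listed in Figure \ref{cubic} as the pairwise non-isomorphic cubic graphs on $10$ vertices, and within each of the three pairs named below the Sombor characteristic polynomials shown in Table 1 already differ, which is a convenient independent check of non-isomorphism. Consequently, whenever two of these $21$ graphs happen to share the same Sombor energy, each of them is a distinct element of the same ${\cal EN_{SO}}$-equivalence class, and so neither is ${\cal EN_{SO}}$-unique.

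Scanning Table 2, three coincidences of Sombor energy appear:
\begin{align*}
En_{SO}(G_1) &= En_{SO}(G_8) \approx 64.161,\\
En_{SO}(G_{12}) &= En_{SO}(G_{17}) \approx 67.882,\\
En_{SO}(G_{16}) &= En_{SO}(G_{20}) \approx 59.396.
\end{align*}
These three pairs are pairwise disjoint and together exhibit six distinct cubic graphs of order $10$, each lying in an ${\cal EN_{SO}}$-class of cardinality at least two. Listing them yields the statement of the proposition.

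The main (and essentially only) subtlety is that Table 2 records numerical values to three decimal places, whereas ${\cal EN_{SO}}$-equivalence is an exact numerical equality. To make the argument fully rigorous I would recompute the Sombor spectra of the three pairs $(G_1,G_8)$, $(G_{12},G_{17})$, $(G_{16},G_{20})$ from the closed-form polynomials of Table 1 at substantially higher precision, or preferably with interval arithmetic, and verify that the sum of absolute values of the eigenvalues agrees far beyond the rounding of the printed table. Absent a structural or algebraic argument producing an exact identification between the eigenvalue multisets (up to sign) of the two graphs in each pair, this high-precision recomputation is the only technical step required, and no new mathematical ingredient beyond the proposition setup and Table 2 itself is needed.
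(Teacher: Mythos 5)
Your proof is correct and takes essentially the same route as the paper: the paper's own proof simply reads the three coincident pairs $[G_1]=\{G_1,G_8\}$, $[G_{12}]=\{G_{12},G_{17}\}$ and $[G_{16}]=\{G_{16},G_{20}\}$ off Table 2 and concludes that these six graphs are not ${\cal EN_{SO}}$-unique. Your additional caveat about the three-decimal-place rounding (and the need for an exact or higher-precision verification of equality) is a legitimate point that the paper does not address, so your version is if anything slightly more careful, but the underlying argument is identical.
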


\begin{proof}
By observing Table 2, we see that $[G_1]=\{G_1,G_8\}$, $[G_{12}]=\{G_{12},G_{17}\}$ and $[G_{16}]=\{G_{16},G_{20}\}$. Therefore we have fifteen cubic graphs of order $10$  which are ${\cal EN_{SO}}$-unique.\quad\qed
\end{proof}

As an immediate result of Proposition \ref{prop-unique}, we have:

\begin{corollary}
In general, two $k$-regular graphs of the same order may not have same Sombor energy.
\end{corollary}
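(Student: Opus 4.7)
The plan is to supply the corollary with an explicit counterexample drawn from the table just computed. Since the corollary is a negative statement --- ``may not have same Sombor energy'' --- I only need to exhibit two $k$-regular graphs on the same number of vertices whose Sombor energies differ, and then specialize $k$ and $n$.

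First, I would specialize to the case $k=3$, $n=10$, which has already been worked out in detail. Table 2 lists the Sombor energies of all twenty-one cubic graphs of order $10$, and a quick scan shows many distinct numerical values. For concreteness, $G_1$ has $En_{SO}(G_1)\approx 64.161$ while $G_{21}$ has $En_{SO}(G_{21})\approx 50.911$; both are $3$-regular graphs of order $10$, so this single pair already settles the claim.

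Alternatively, one can route the argument through Proposition \ref{prop-unique}: the proposition identifies only three non-trivial $\mathcal{EN}_{SO}$-equivalence classes among the $21$ cubic graphs of order $10$, namely $\{G_1,G_8\}$, $\{G_{12},G_{17}\}$, $\{G_{16},G_{20}\}$, and declares the remaining $15$ graphs $\mathcal{EN}_{SO}$-unique. Hence these graphs split into $18$ distinct $\mathcal{EN}_{SO}$-classes rather than a single one, which is more than enough to conclude that two $k$-regular graphs of the same order can fail to share a Sombor energy.

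There is no real obstacle, since the computational work lives in Tables 1 and 2. The only mild caveat worth flagging is that the tabulated energies are rounded to three decimal places, so one should, if pressed, justify the inequality by pointing to the corresponding Sombor characteristic polynomials in Table 1 (which manifestly differ in several coefficients) rather than to the rounded numerical values alone.
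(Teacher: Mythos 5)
Your proposal is correct and matches the paper's own (implicit) argument: the paper derives this corollary immediately from Table 2 via Proposition \ref{prop-unique}, exactly as you do by exhibiting cubic graphs of order $10$ with distinct tabulated Sombor energies. One minor note: your fallback of comparing characteristic polynomials in Table 1 does not by itself establish distinct energies (different polynomials can in principle yield equal sums of absolute values of roots), but this does not affect the main argument, which stands on the numerical gap between, say, $En_{SO}(G_1)$ and $En_{SO}(G_{21})$.
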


\begin{figure}[!h]
	\begin{center}
		\psscalebox{0.55 0.55}
{
\begin{pspicture}(0,-5.785)(9.63,3.085)
\psdots[linecolor=black, dotsize=0.4](4.8,2.435)
\psdots[linecolor=black, dotsize=0.4](4.8,0.435)
\psdots[linecolor=black, dotsize=0.4](2.8,-1.165)
\psdots[linecolor=black, dotsize=0.4](3.6,-3.565)
\psdots[linecolor=black, dotsize=0.4](6.0,-3.565)
\psdots[linecolor=black, dotsize=0.4](6.8,-1.165)
\psline[linecolor=black, linewidth=0.08](4.8,0.435)(3.6,-3.565)(6.8,-1.165)(2.8,-1.165)(6.0,-3.565)(4.8,0.435)(4.8,0.435)
\psdots[linecolor=black, dotsize=0.4](8.8,-0.365)
\psdots[linecolor=black, dotsize=0.4](0.8,-0.365)
\psdots[linecolor=black, dotsize=0.4](6.8,-5.165)
\psdots[linecolor=black, dotsize=0.4](2.8,-5.165)
\psline[linecolor=black, linewidth=0.08](4.8,2.435)(8.8,-0.365)(6.8,-5.165)(2.8,-5.165)(0.8,-0.365)(4.8,2.435)(4.8,0.435)(4.8,0.435)
\psline[linecolor=black, linewidth=0.08](6.8,-1.165)(8.8,-0.365)(8.8,-0.365)
\psline[linecolor=black, linewidth=0.08](6.0,-3.565)(6.8,-5.165)(6.8,-5.165)
\psline[linecolor=black, linewidth=0.08](3.6,-3.565)(2.8,-5.165)(2.8,-5.165)
\psline[linecolor=black, linewidth=0.08](2.8,-1.165)(0.8,-0.365)(0.8,-0.365)
\rput[bl](4.68,2.835){$v_1$}
\rput[bl](0.0,-0.365){$v_2$}
\rput[bl](2.18,-5.785){$v_3$}
\rput[bl](6.82,-5.785){$v_4$}
\rput[bl](9.26,-0.445){$v_5$}
\rput[bl](5.02,0.635){$v_6$}
\rput[bl](2.72,-0.765){$v_7$}
\rput[bl](2.98,-3.565){$v_8$}
\rput[bl](6.24,-3.605){$v_9$}
\rput[bl](6.56,-0.725){$v_{10}$}
\end{pspicture}
}
	\end{center}
	\caption{Petersen graph } \label{petersen}
\end{figure}
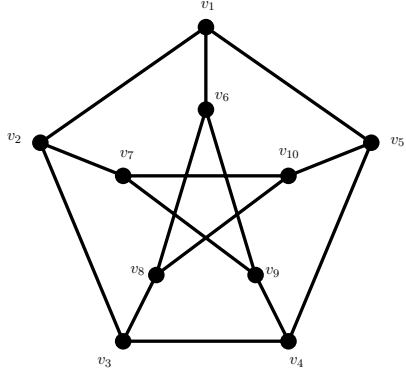

\begin{theorem}
Let ${\cal G}$ be the family of $3$-regular graphs of order $10$. For the Petersen graph $P$ (Figure \ref{petersen}), we have the following properties:
\begin{itemize}
\item[(i)]
$P$  is not ${\cal EN_{SO}}$-unique in ${\cal G}$.
\item[(ii)]
$P$ has the maximum sombor energy in ${\cal G}$.
\end{itemize}
\end{theorem}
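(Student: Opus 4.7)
The plan is to exploit the fact that $P$ is $3$-regular, so its Sombor matrix is a simple scalar multiple of its ordinary adjacency matrix, and then to read off the conclusions from Table 2.

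First I would observe that for any $k$-regular graph $G$ every adjacent pair has $\sqrt{d_i^2+d_j^2}=k\sqrt{2}$, hence
\[
A_{SO}(G)=k\sqrt{2}\,A(G),
\]
so the Sombor spectrum is obtained from the ordinary adjacency spectrum by multiplying each eigenvalue by $k\sqrt{2}$, and therefore $En_{SO}(G)=k\sqrt{2}\,E(G)$. For the Petersen graph $P$ we have $k=3$ and it is standard that $A(P)$ has spectrum $\{3,\,1^{(5)},\,(-2)^{(4)}\}$, giving $E(P)=3+5\cdot 1+4\cdot 2=16$. Substituting yields
\[
En_{SO}(P)=3\sqrt{2}\cdot 16=48\sqrt{2}\approx 67.882.
\]

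For part (i), since the list in Figure \ref{cubic} enumerates all $21$ cubic graphs of order $10$, the Petersen graph appears as one of the $G_i$'s. Comparing the value $48\sqrt{2}\approx 67.882$ with Table 2, only $G_{12}$ and $G_{17}$ achieve this number; hence $P\in\{G_{12},G_{17}\}$ while the other graph in that pair is a distinct cubic graph of order $10$ with the same Sombor energy. Therefore $[P]$ contains at least two graphs and $P$ is not ${\cal EN_{SO}}$-unique in ${\cal G}$.

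For part (ii), I would simply inspect Table 2 and note that $67.882$ is the largest value appearing there, so $En_{SO}(P)=48\sqrt{2}$ is maximum over all $21$ cubic graphs of order $10$. The only slightly delicate point — and the place where one must be a bit careful rather than compute routinely — is to justify that $48\sqrt{2}$ rounds to the value $67.882$ displayed in the table, so that the identification of $P$ with $G_{12}$ or $G_{17}$ is unambiguous; this is immediate from $\sqrt 2\approx 1.41421356$, giving $48\sqrt 2\approx 67.8822\ldots$, which agrees with the entries for $G_{12}$ and $G_{17}$ to three decimals and differs from every other entry in the table. This completes both parts.
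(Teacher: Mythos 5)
Your proof is correct and reaches the same conclusions, but the computation of $En_{SO}(P)$ goes by a different and cleaner route than the paper's. The paper writes out the full $10\times 10$ Sombor matrix of $P$ (every nonzero entry equal to $3\sqrt{2}$), computes the characteristic polynomial $(\lambda-9\sqrt{2})(\lambda+6\sqrt{2})^4(\lambda-3\sqrt{2})^5$ directly, and sums the absolute values of its roots to obtain $48\sqrt{2}$. You instead isolate the general identity $A_{SO}(G)=k\sqrt{2}\,A(G)$ for $k$-regular $G$ and invoke the standard adjacency spectrum $\{3,\,1^{(5)},\,(-2)^{(4)}\}$ of the Petersen graph; this yields the same Sombor eigenvalues $9\sqrt{2}$, $3\sqrt{2}$ (five times), $-6\sqrt{2}$ (four times) with no matrix computation, and it makes explicit a reusable fact --- for regular graphs the Sombor energy is exactly $k\sqrt{2}$ times the ordinary graph energy --- that underlies all of Section 3 but is never stated in the paper. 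The concluding steps coincide: both arguments match $48\sqrt{2}\approx 67.882$ against Table 2 to place $P$ in $\{G_{12},G_{17}\}$, deduce that $P$ is not ${\cal EN_{SO}}$-unique because a second cubic graph of order $10$ attains the same value, and obtain maximality by inspecting the table. One caveat, shared equally by your argument and the paper's: part (i) ultimately rests on the three-decimal agreement of the tabulated energies of $G_{12}$ and $G_{17}$, whose characteristic polynomials in Table 1 are different, so exact equality of the two energies is being accepted from the numerics rather than proved; this is not a defect of your proposal relative to the paper, but it is worth being aware of.
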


\begin{proof}
\begin{itemize}
\item[(i)]
The Sombor matrix of $P$ is
$$A_{SO}(P)=\left( \begin{array}{cccccccccc}
0&3\sqrt{2} &0 &0 &3\sqrt{2} &3\sqrt{2} &0 & 0&0 & 0 \\
3\sqrt{2}&0 &3\sqrt{2} &0 &0 &0 &3\sqrt{2} & 0&0 & 0 \\
0&3\sqrt{2} &0 &3\sqrt{2} &0 &0 &0 & 3\sqrt{2}&0 & 0 \\
0&0 &3\sqrt{2} &0 &3\sqrt{2} &0 &0 & 0&3\sqrt{2} & 0 \\
3\sqrt{2}&0 &0 &3\sqrt{2} &0 &0 &0 & 0&0 & 3\sqrt{2} \\
3\sqrt{2}&0 &0 &0 &0 &0 &0 & 3\sqrt{2}&3\sqrt{2} & 0 \\
0&3\sqrt{2} &0 &0 &0 &0 &0 & 0&3\sqrt{2} & 3\sqrt{2} \\
0&0 &3\sqrt{2} &0 &0 &3\sqrt{2} &0 & 0&0 & 3\sqrt{2} \\
0&0 &0 &3\sqrt{2} &0 &3\sqrt{2} &3\sqrt{2} & 0&0 & 0 \\
0&0 &0 &0 &3\sqrt{2} &0 &3\sqrt{2} & 3\sqrt{2}&0 & 0 \\
\end{array} \right).$$
So
\begin{align*}
\phi _{SO}(P,\lambda)&=det(\lambda I -A_{SO}(P))=(\lambda -9\sqrt{2})(\lambda +6\sqrt{2})^4 (\lambda-3\sqrt{2})^5.
\end{align*}
Therefore we have:
$$\lambda _1=9\sqrt{2}~~,~~\lambda _2=\lambda _3=\lambda _4=\lambda _5=-6\sqrt{2}~~,~~\lambda _6=\lambda _7=\lambda _8=\lambda _9=\lambda _{10}=3\sqrt{2},$$
and so we have $En_{SO}(P)=48\sqrt{2}$. By Table 2, we have $P\in \{G_{12},G_{17}\}$. Hence $P$  is not ${\cal EN_{SO}}$-unique  in ${\cal G}$.
\item[(ii)]
It follows from Part (i) and Table 2.\quad\qed
\end{itemize}
\end{proof}

Now we check that is there any relationship between Sombor energy and permanent of adjacency matrix of two connected $k$-regular graphs of the same order?

\begin{theorem}\label{permanent}
If two connected $k$-regular graphs have the same Sombor energy, then their adjacency matrices may have or have not the same permanent.
\end{theorem}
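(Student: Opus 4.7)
The statement is an ``existence in both directions'' claim: I must exhibit one pair of connected $k$-regular graphs of the same order with equal Sombor energy and equal adjacency permanent, and another pair with equal Sombor energy but unequal adjacency permanents. The natural place to look is the data already compiled in Table~2, since it records every coincidence of Sombor energy among cubic graphs of order $10$.

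My plan is as follows. First, extract from Table~2 the three pairs of cubic graphs of order $10$ that share a Sombor energy value, namely $[G_1]=\{G_1,G_8\}$, $[G_{12}]=\{G_{12},G_{17}\}$ (this one contains the Petersen graph), and $[G_{16}]=\{G_{16},G_{20}\}$ identified in Proposition~\ref{prop-unique}. Each $G_i$ is connected and $3$-regular on $10$ vertices, so any two members of the same bracket give a candidate pair for the theorem. Second, for each of the six graphs involved, I would write down the $10\times 10$ adjacency matrix $A(G_i)$ from Figure~\ref{cubic} and compute $\mathrm{per}(A(G_i))$ using Maple (the same computational tool already used to generate Table~1); the computation is a one-line call and is entirely routine for a matrix of this size.

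Third, I compare the permanents within each of the three classes. The expected outcome is that at least one of the three pairs will have matching permanents while at least one other will not---for instance, one would anticipate that a highly symmetric pair like $\{G_{12},G_{17}\}$ (containing the Petersen graph) may coincide in permanent, while a pair like $\{G_{16},G_{20}\}$ or $\{G_1,G_8\}$ whose structures differ more substantially will not. Once one ``equal'' pair and one ``unequal'' pair are exhibited, the conclusion ``may have or may not have'' follows immediately as a matter of logic.

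The main obstacle is not conceptual but rather the verification: I need the permanent values to actually fall in both directions. If by accident all three pairs agreed (or all three disagreed), the statement as phrased would fail, and I would have to search beyond cubic graphs of order $10$---for example, among $4$-regular graphs of small order, or among larger cubic graphs---to locate the missing direction. Given the abundance of pairs of cospectral regular graphs whose permanents differ (a well-documented phenomenon), and the fact that isomorphic graphs trivially satisfy the ``equal'' side, I expect the three existing pairs in Table~2 to be enough, and the proof to reduce to quoting the six computed permanents in a small table and invoking the bracket information from Proposition~\ref{prop-unique}.
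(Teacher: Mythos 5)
Your proposal is correct and follows essentially the same route as the paper: it takes the equal-energy pairs from Table~2, computes the adjacency permanents (the paper uses Ryser's method, finding $per(A(G_1))=per(A(G_8))=72$ while $per(A(G_{16}))=144\neq 180=per(A(G_{20}))$), and concludes from one agreeing and one disagreeing pair. Your specific guess about which pairs would coincide is reversed relative to the computed values, but since your argument defers to the computation rather than the guess, this does not affect the proof.
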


\begin{proof}
We consider to the cubic graphs of order 10. By Table 2, $ En_{SO}(G_1)=En_{SO}(G_8)$ and $ En_{SO}(G_{16})=En_{SO}(G_{20})$. Now we find $per(A(G_1))$, $per(A(G_8))$, $per(A(G_{16}))$ and $per(A(G_{20}))$. For graph $G_1$, we have
$$A(G_1)=\left( \begin{array}{cccccccccc}
0&1 &0 &0 &0 &1 &0 & 0&0 & 1 \\
1&0 & 1& 1&0 &0 &0 &0 &0 &0  \\
0&1& 0& 1&1 &0 &0 &0 &0 &0  \\
0&1 & 1& 0&1 &0 &0 &0 &0 &0  \\
0&0 & 1& 1&0 &1 &0 &0 &0 &0  \\
1&0 & 0& 0&1 &0 &1 &0 &0 &0  \\
0&0 & 0& 0&0 &1 &0 &1 &1 &0  \\
0&0 & 0& 0&0 &0 &1 &0 &1 &1  \\
0&0 & 0& 0&0 &0 &1 &1 &0 &1  \\
1&0 & 0& 0&0 &0 &0 &1 &1 &0  \\
\end{array} \right).$$

By Ryser's method, we have $per(A(G_1))=72$. Similarly we have:
$$per(A(G_8))=72 ~~,~~ per(A(G_{16}))=144 ~~,~~ per(A(G_{20}))=180.$$
So we have the result. \quad\qed
\end{proof}

By Theorem \ref{permanent}, we know that If two connected $k$-regular graphs have the same Sombor energy, we can say nothing about the permanent of their adjacency matrices. Now by the following Remark, we show that if two graphs have the same permanent, then we can not conclude that they have same Sombor energy. Therefore, in general, there is no relation between Sombor energy and permanent of adjacency matrices of $k$-regular graphs with the same order.

\begin{remark}\label{remark-per}
In the class of cubic graphs of order 10, We have $per(A(G_7))=per(A(G_{11}))=85$, but as we see in Table 2, $En_{SO}(G_7)\neq En_{SO}(G_{11})$.
\end{remark}

As an observation, we see that every graph does not have integer-valued Sombor energy. We end this section with the following conjecture:

\begin{conjecture}\label{conj3}
There is no graph with integer-valued Sombor energy.
\end{conjecture}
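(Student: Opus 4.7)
The plan is to first reduce the conjecture to a clean spectral question on regular graphs, and then attack the general case via algebraic number theory. The starting point is that for any $k$-regular graph $G$, the Sombor matrix satisfies $A_{SO}(G)=k\sqrt{2}\,A(G)$, hence
\begin{equation*}
En_{SO}(G)\;=\;k\sqrt{2}\cdot E(G),
\end{equation*}
where $E(G)=\sum_i|\lambda_i|$ is the ordinary graph energy. For $En_{SO}(G)$ to be a positive integer $N$, one needs $E(G)=N/(k\sqrt{2})$; since each adjacency eigenvalue is a real algebraic integer, so is $E(G)$, and combining this with $E(G)\in\mathbb{Q}(\sqrt{2})$ forces $E(G)=m\sqrt{2}$ for some positive integer $m$ (the ring of integers of $\mathbb{Q}(\sqrt{2})$ is $\mathbb{Z}[\sqrt{2}]$). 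Thus in the regular case the conjecture reduces to showing that no regular graph has energy in $\sqrt{2}\,\mathbb{Z}_{>0}$.

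To settle this reduced question, I would exploit the identity
\begin{equation*}
E(G)^2 \;=\; \sum_i \lambda_i^2 \;+\; 2\sum_{i<j}|\lambda_i|\,|\lambda_j|,
\end{equation*}
combined with $\sum_i\lambda_i^2\in\mathbb{Z}$ and $E(G)=m\sqrt{2}$, which forces the cross sum $\sum_{i<j}|\lambda_i|\,|\lambda_j|$ to be a rational integer, a very restrictive spectral condition. I would then try to derive a contradiction by a Galois-theoretic analysis in the splitting field $L$ of the characteristic polynomial of $A(G)$: the Galois group permutes the $\lambda_i$, and tracking how each orbit splits between positive and negative eigenvalues should exclude the forced integrality of the cross sum. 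For general (non-regular) graphs, the analogous framework lives inside the multiquadratic field $K=\mathbb{Q}(\{\sqrt{d_i^2+d_j^2}\,:\,v_i\sim v_j\})$, in which $A_{SO}(G)$ has its entries, and one would carry out the same kind of Galois bookkeeping on the Sombor spectrum within the Galois closure of $K$.

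The hard part is precisely this last step: even in the classical adjacency setting the classification of graphs with integer energy is wide open, so one should not expect a short proof by purely algebraic means, and the non-uniformity of the surds $\sqrt{d_i^2+d_j^2}$ makes the Galois bookkeeping significantly harder than in the regular case. A realistic fallback is therefore a two-pronged strategy: establish the conjecture rigorously for regular graphs and for graphs with at most two distinct vertex degrees (where $[K:\mathbb{Q}]\le 4$), while in parallel conducting a systematic computer search over trees, unicyclic graphs, and small members of other families. A counterexample would of course refute the conjecture; persistent negative results together with the Galois-theoretic structural results on regular graphs would provide strong conditional evidence in its favour.
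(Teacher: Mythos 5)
This statement is posed in the paper as an open conjecture, supported only by the computational observation that none of the graphs in Tables~1--2 (nor the other families treated in Section~2) has integer Sombor energy; the paper offers no proof and explicitly lists proving Conjecture~\ref{conj3} as future work. Your submission is likewise not a proof but a research programme, and you say so yourself. The one piece that is actually rigorous is the reduction for $k$-regular graphs: $A_{SO}(G)=k\sqrt{2}\,A(G)$, so $En_{SO}(G)=k\sqrt{2}\,E(G)$, and since $E(G)$ is a totally real algebraic integer, $En_{SO}(G)\in\mathbb{Z}_{>0}$ forces $E(G)=m\sqrt{2}$ with $m=N/(2k)\in\mathbb{Z}$. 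That is a correct and worthwhile observation which goes beyond what the paper records, and it cleanly explains, for instance, why $En_{SO}(P)=48\sqrt{2}$ is irrational.

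The gap is everything after that. The claim that the integrality of $\sum_{i<j}|\lambda_i|\,|\lambda_j|$ is ``very restrictive'' is not substantiated: it is a necessary condition, not a contradiction, and the proposed Galois bookkeeping is never carried out. Indeed it cannot be carried out by the sketched argument alone, because the condition $E(G)=m\sqrt{2}$ is equivalent to the sum of the positive adjacency eigenvalues equalling $t\sqrt{2}$ for an integer $t$ (using $\mathrm{tr}\,A=0$ and that $m$ must be even), and nothing in the known arithmetic of graph energy excludes this; the classical Bapat--Pati and Pirzada--Gutman results rule out odd integers and square roots of odd integers, neither of which applies to $\sqrt{2m^2}$. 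For non-regular graphs you only name the multiquadratic field $K$ and do not even obtain the analogue of the reduction, since the surds $\sqrt{d_i^2+d_j^2}$ need not be commensurable and the Sombor eigenvalues are no longer scalar multiples of adjacency eigenvalues. So the proposal should be read as a plausible plan of attack on a special case of an open problem, not as a proof; as it stands the conjecture remains open, and your fallback suggestion (rigorous treatment of the regular case plus systematic search) is the honest description of what has actually been achieved.
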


\section{Conclusions}

In this paper, we obtained the Sombor characteristic polynomial and Sombor energy of specific graphs such as paths, cycles, stars, complete bipartite graphs and complete graphs. Also we studied Sombor energy of 2-regular and 3-regular graphs. 

Future topics of interest for future research include the following suggestions:

\begin{itemize}
\item[•]
Proving Conjecture \ref{conj3} or Giving a graph with integer-valued Sombor energy.
\item[•]
What is the relationship between $En_{SO}(G)$ and $En_{SO}(G-e)$ where $e\in E(G)$?
\item[•]
What can we say about $En_{SO}(G)$ and $En_{SO}(G-v)$ where $v\in V(G)$?
\item[•]
What is the sombor energy of $G*H$ where $*$ is some kind of operation on two graph?
\item[•]
If two graphs of the same order be {\it ${\cal EN_{SO}}$-equivalent}, do they have any  properties in common?
\end{itemize}

	\section{Acknowledgements} 
	
	The  author would like to thank the Research Council of Norway and Department of Informatics, University of
	Bergen for their support.

\end{document}